\def\pmod #1{\ ({\rm{mod}}\ #1)}
\def\l{\left}
\def\r{\right}
\def\bg{\bigg}
\def\({\bg(}
\def\){\bg)}
\theoremstyle{plain}
\newtheorem{theorem}{Theorem}
\newtheorem{lemma}{Lemma}
\theoremstyle{definition}
\theoremstyle{remark}
\newtheorem{remark}{Remark}
\begin{document}

\hbox{}
\medskip

\title
[{Supercongruences involving binomial coefficients}]
{Supercongruences involving products of two binomial coefficients modulo $p^4$}

\author
[Guo-Shuai Mao] {Guo-Shuai Mao}

\address {(Guo-Shuai Mao) Department of Mathematics, Nanjing
University of Information Science and Technology, Nanjing 210044,  People's Republic of China\\
{\tt maogsmath@163.com  } }

\keywords{Congruences; Central binomial coefficients; Harmonic numbers; Bernoulli numbers.
\newline \indent 2010 {\it Mathematics Subject Classification}. Primary 11A07; Secondary 05A10, 11B65, 11B68.
\newline \indent The author was supported by the National Natural Science Foundation of China (Grant No. 12001288).}
\begin{abstract} In this paper, we mainly prove a congruence conjecture of Z.-W. Sun \cite{Sjnt}: Let $p>5$ be a prime. Then
$$
\sum_{k=(p+1)/2}^{p-1}\frac{\binom{2k}k^2}{k16^k}\equiv-\frac{21}2H_{p-1}\pmod{p^4},
$$
where $H_n$ denotes the $n$-th harmonic number.
\end{abstract}
\maketitle
\section{Introduction}
\setcounter{lemma}{0}
\setcounter{theorem}{0}
\setcounter{corollary}{0}
\setcounter{remark}{0}
\setcounter{equation}{0}
\setcounter{conjecture}{0}
The Bernoulli numbers $\{B_{n}\}$ and Bernoulli polynomials $\{B_{n}(x)\}$ are defined by
\begin{gather*}
 B_0=1,\ \ \ \sum_{k=0}^{n-1}\binom{n}{k}B_{k}=0\ \ (n\geq2),\\ B_n(x)=\sum_{k=0}^n\binom nkB_kx^{n-k}\ \ (n=0,1,2,\ldots).
 \end{gather*}
Let $m>0$ and let $(a_1,a_2,\ldots,a_m)\in{(\mathbb{N})^m}=\substack{\underbrace{\mathbb{N}\times\mathbb{N}\cdots\times\mathbb{N}}\\ m\ times}$, where $\mathbb{N}=\{0,1,2,\ldots\}$. For any $n\geq m$, we define the alternating multiple harmonic sum as
$$
H(a_1,a_2,\ldots,a_m;n)=\sum_{\substack{1\leq k_1<k_2<\ldots<k_m\leq n}}\prod_{i=1}^m\frac{\mbox{sign}(a_i)^{k_i}}{k_i^{|a_i|}}.
$$
The integers $m$ and $\sum_{i=1}^m|a_i|$ are respectively the depth and the weight of the harmonic sum. As a matter of convenience, we remember $H(1;n)$ as $H_n$.
We know several non-alternating harmonic sums modulo a power of a prime as follows:

\noindent{\rm (i)}. (\cite{H}) for $a,r>0$ and for any prime $p>ar+2$
\begin{align*}
H(\{a\}^r;p-1)\equiv\begin{cases}(-1)^r\frac{a(ar+1)}{2(ar+2)}p^2B_{p-ar-2}\ &\pmod{p^3}\qquad \text{if $ar$ is odd},\\
(-1)^{r-1}\frac{a}{ar+1}p B_{p-ar-1}\ &\pmod{p^2}\qquad \text{if $ar$ is even}.\end{cases}
\end{align*}
{\rm(ii)}. (\cite{s2000}) for a positive integer $a$ and for any prime $p>a+2$, we have
\begin{align*}
H\l(a;\frac{p-1}2\r)\equiv\begin{cases}-2q_p(2)\pmod{p} &\text{a=1},\\
-\frac{2^a-2}{a}B_{p-a} \pmod{p} &\text{if $a>1$ is odd},\\
\frac{a(2^{a+1}-1)}{2(a+1)}pB_{p-a-1} \pmod{p^2} &\text{if $a$ is even},\end{cases}
\end{align*}
where $q_p(a)=(a^{p-1}-1)/p$ stands for the Fermat quotient.

\noindent{\rm (iii)}. (\cite{H}) For $a,b>0$ with $a+b$ odd and for any prime $p>a+b+1$, we have
$$
H(a,b;p-1)\equiv\frac{(-1)^b}{a+b}\binom{a+b}aB_{p-a-b}\pmod p.
$$
{\rm (iv)}. (\cite[Lemma 1]{HHT}) if $a,b$ are positive integers and $a+b$ is odd, then for any prime $p>a+b$,
$$
H\l(a,b;\frac{p-1}2\r)\equiv\frac{B_{p-a-b}}{2(a+b)}\l((-1)^b\binom{a+b}a+2^{a+b}-2\r)\pmod p.
$$
{\rm (v)}. (\cite[Corollary 2.3]{TZ}) Let $a\in\mathbb{Z}_{\geq0}$ and $p\geq a+2$ be a prime. Then
\begin{align*}
H(-a;p-1)\equiv\begin{cases}-\frac{2(1-2^{p-a}))}{a}B_{p-a}\ &\pmod{p}\qquad \text{if $a$ is odd},\\
\frac{a(1-2^{p-1-a})}{a+1}pB_{p-1-a}\ &\pmod{p^2}\qquad \text{if $a$ is even}.\end{cases}
\end{align*}
{\rm (vi)}. (\cite[Theorem 3.1]{TZ}) Let $a,b\in\mathbb{N}$ and $p\geq a+b+2$ be a prime. If $a+b$ is odd then we have
$$
H(-a,b;p-1)\equiv H(a,-b;p-1)\equiv\frac{1-2^{p-a-b}}{a+b}B_{p-a-b}\pmod p.
$$
{\rm (vii)}. (\cite[Propositions 6.3 and 7.3, (116)]{TZ}) Let $p>5$ be a prime and 
$$X:=\frac{B_{p-3}}{p-3}-\frac{B_{2p-4}}{4p-8}.$$ Then 
$$
H(-4;p-1)\equiv H(2,2;p-1)\equiv H(1,3;p-1)\equiv0\pmod p,
$$
$$
H(2,-1;p-1)\equiv-\frac32X-\frac76pq_p(2)B_{p-3}+pH(1,-3;p-1)\pmod {p^2},
$$
\begin{align*}
-\frac12H(1,2;p-1)&\equiv\frac12H(2,1;p-1)\equiv H(-3;p-1)\\
&\equiv-2H(1,-2;p-1)\equiv3X\pmod{p^2}
\end{align*}
{\rm (viii)}. (\cite[Theorems 5.1 and 5.2, Remark 5.1]{s2000}) Let $p>3$ be a prime. Then
$$
-\frac2pH_{p-1}\equiv H(2;p-1)\equiv \frac27H\left(2;\frac{p-1}2\right)\equiv-4pX\pmod{p^3},
$$
$$
H\left(3;\frac{p-1}2\right)\equiv12X\pmod{p^2}.
$$
Throughout the paper, $p$ always stands for a prime, and $\mathbb{Z}_p$ denotes the set of $p$-adic integers, and for $a\in\mathbb{Z}_p$, let $\langle a\rangle_p\in\{0,1,\ldots,p-1\}$ be given by $\langle a\rangle_p\equiv a\pmod p$.

\noindent In \cite{Tk}, Tauraso proved that for any prime $p>5$,
\begin{align}\label{T}
\sum_{k=1}^{p-1}\frac{\binom{2k}k^2}{k6^k}\equiv-2H_{\frac{p-1}2}\pmod{p^3},
\end{align}
and in \cite{Sjnt}, Sun proved that
$$
\sum_{k=\frac{p+1}2}^{p-1}\frac{\binom{2k}k^2}{k6^k}\equiv\frac{7}2p^2B_{p-3}\pmod{p^3}.
$$
Tauraso \cite{Tk} also obtained the following congruence: Let $p>5$ be a prime, $a\in\mathbb{Z}_p$ and $t=(a-\langle a\rangle_p)/p$. Then
$$
\sum_{k=1}^{p-1}\frac1k\binom{a}k\binom{-a-1}k\equiv-2H_{\langle a\rangle_p}+2ptH(2;\langle a\rangle_p)\pmod{p^2}.
$$ 
Sun \cite{s2015} generalised Tauraso's result to
\begin{align*}
&\sum_{k=1}^{p-1}\frac1k\binom{a}k\binom{-a-1}k\\
&\equiv-\frac23p^2tB_{p-3}-2H_{\langle a\rangle_p}+2ptH(2;\langle a\rangle_p)+2p^2tH(3;\langle a\rangle_p)\pmod{p^3}.
\end{align*}
Motivated by the above, we generalised Z.-H. Sun's result and confirm a conjecture of Z.-W. Sun \cite{S11a}:
\begin{theorem}\label{Th1.1} Let $p>5$ be a prime, $a\in\mathbb{Z}_p$ and $t:=(a-\langle a\rangle_p)/p$. Then
\begin{align*}
&\sum_{k=1}^{p-1}\frac1k\binom{a}k\binom{-1-a}k\equiv4p^2tX-2H_{\langle a\rangle_p}+2ptH(2;\langle a\rangle_p)+2p^2tH(3;\langle a\rangle_p)\notag\\
&-2p^3t(2t^2+4t+1)H(4;\langle a\rangle_p)+4p^3t(t+1)\sum_{k=1}^{\langle a\rangle_p}\frac{H_k}{k^3}\pmod{p^4},
\end{align*}
and if $\langle a\rangle_p\leq (p-1)/2$,
\begin{align*}
&\sum_{k=1}^{\frac{p-1}2}\frac1k\binom{a}k\binom{-1-a}k\\
&\equiv-12p^2t^2X+14p^2tX-2H_{\langle a\rangle_p}+4ptH(2;\langle a\rangle_p)-6p^2t^2H(3;\langle a\rangle_p)\notag\\
&+8p^3t^3H(4;\langle a\rangle_p)+4p^2t\sum_{k=1}^{\langle a\rangle_p}\frac1{k^2}\sum_{j=1}^k\frac1{2j-1}-8p^3t^2\sum_{k=1}^{\langle a\rangle_p}\frac1{k^3}\sum_{j=1}^k\frac1{2j-1}\notag\\
&+4p^3t\sum_{k=1}^{\langle a\rangle_p}\frac1{k^2}\left(\sum_{j=1}^k\frac1{2j-1}\right)^2-8p^3t^2\sum_{k=1}^{\langle a\rangle_p}\frac{1}{k^2}\sum_{j=1}^k\frac1{(2j-1)^2}\pmod{p^4}.
\end{align*}
Furthermore,
\begin{equation}\label{1.0}
\sum_{k=1}^{p-1}\frac{\binom{2k}k^2}{k16^k}\equiv-2H_{\frac{p-1}2}-p^3\sum_{k=1}^{\frac{p-1}2}\frac{H_k}{k^3}\pmod{p^4}.
\end{equation}
\begin{equation}\label{1.1}
\sum_{k=\frac{p+1}2}^{p-1}\frac{\binom{2k}k^2}{k16^k}\equiv-\frac{21}2H_{p-1}\pmod{p^4}.
\end{equation}
\end{theorem}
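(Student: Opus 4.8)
\medskip
\noindent The plan is to specialize Theorem~\ref{Th1.1} at $a=-1/2$ and subtract two instances. Since $\binom{-1/2}{k}=(-1)^k\binom{2k}{k}/4^k$, for $a=-1/2$ we have $\binom{a}{k}\binom{-a-1}{k}=\binom{2k}{k}^2/16^k$, hence $S_{p-1}(-1/2)=\sum_{k=1}^{p-1}\binom{2k}k^2/(k16^k)$ and $S_{(p-1)/2}(-1/2)=\sum_{k=1}^{(p-1)/2}\binom{2k}k^2/(k16^k)$, so that
\[
\sum_{k=(p+1)/2}^{p-1}\frac{\binom{2k}k^2}{k16^k}=S_{p-1}(-1/2)-S_{(p-1)/2}(-1/2).
\]
Here $\langle a\rangle_p=(p-1)/2$ and $t=-1/2$. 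I would take $S_{p-1}(-1/2)$ from \eqref{1.0}, and $S_{(p-1)/2}(-1/2)$ from the second congruence of Theorem~\ref{Th1.1} after setting $t=-1/2$ there.

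Two cancellations do the heavy lifting. First, in the formula for $S_{(p-1)/2}(-1/2)$ the explicit term $-12p^2t^2X+14p^2tX$ equals $-10p^2X$, while $4ptH(2;(p-1)/2)=-2pH(2;(p-1)/2)$ and $-6p^2t^2H(3;(p-1)/2)=-\tfrac32 p^2H(3;(p-1)/2)$. By item~(viii), $H(2;(p-1)/2)\equiv-14pX\pmod{p^3}$ and $H(3;(p-1)/2)\equiv12X\pmod{p^2}$, so these three contributions add up to $(-10+28-18)p^2X\equiv0\pmod{p^4}$; and $8p^3t^3H(4;(p-1)/2)=-p^3H(4;(p-1)/2)\equiv0\pmod{p^4}$ by item~(ii). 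Hence
\[
S_{(p-1)/2}(-1/2)\equiv-2H_{(p-1)/2}-2p^2A-2p^3(B+C+D)\pmod{p^4},
\]
where $A=\sum_{k=1}^{(p-1)/2}\tfrac1{k^2}\sum_{j=1}^k\tfrac1{2j-1}$, $B=\sum_{k=1}^{(p-1)/2}\tfrac1{k^3}\sum_{j=1}^k\tfrac1{2j-1}$, $C=\sum_{k=1}^{(p-1)/2}\tfrac1{k^2}\big(\sum_{j=1}^k\tfrac1{2j-1}\big)^2$ and $D=\sum_{k=1}^{(p-1)/2}\tfrac1{k^2}\sum_{j=1}^k\tfrac1{(2j-1)^2}$. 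Second, subtracting this from \eqref{1.0}, the troublesome term $-2H_{(p-1)/2}$ drops out, and with $E:=\sum_{k=1}^{(p-1)/2}H_k/k^3$ one gets
\[
\sum_{k=(p+1)/2}^{p-1}\frac{\binom{2k}k^2}{k16^k}\equiv2p^2A+2p^3(B+C+D)-p^3E\pmod{p^4}.
\]
As $H_{p-1}\equiv2p^2X\pmod{p^4}$ (item~(viii)), proving \eqref{1.1} is equivalent to the single congruence
\[
2A+2p(B+C+D)-pE\equiv-21X\pmod{p^2}.
\]

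The remaining task, evaluating these ``odd-denominator'' harmonic sums, is where the real effort goes; the key and hardest ingredient is $A$ modulo $p^2$, whereas $B$, $C$, $D$, $E$ are only needed modulo $p$. For $A$ I would use $\sum_{j=1}^k\tfrac1{2j-1}=H_{2k}-\tfrac12H_k$ to reduce to the half-range Euler sums $\sum_{k\le(p-1)/2}H_k/k^2$ and $\sum_{k\le(p-1)/2}H_{2k}/k^2$ modulo $p^2$, evaluating the latter by sorting the summation index by parity and feeding in the full-range value of $\sum_{m=1}^{p-1}H_m/m^2$ plus a companion sum over odd indices; everything then reduces, via items~(i)--(iv), (viii) and Kummer congruences, to a polynomial in $B_{p-3}$ (equivalently in $X$, since $X\equiv-\tfrac16B_{p-3}\pmod p$). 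The weight-$4$ sums $B$, $C$, $D$ are handled the same way but only modulo $p$: interchanging summations reduces them to multiple harmonic sums of weight $4$ covered by items~(iii),(iv),(vii), while $E=H(1,3;(p-1)/2)+H(4;(p-1)/2)$. Collecting all the pieces should give precisely $-21X$ modulo $p^2$, which finishes the proof. The main obstacle is exactly this modulo-$p^2$ computation of $A$: the facts quoted above deliver the relevant half-range Euler sums only modulo $p$ (and $H(2;\cdot)$, $H(3;\cdot)$ only modulo $p^3$ and $p^2$), so a genuine refinement to modulus $p^2$ is needed, and it is most naturally isolated as a separate lemma --- that lemma being the bulk of the work.
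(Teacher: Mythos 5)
Your specialization at $a=-1/2$, $t=-1/2$ is correctly executed: the cancellation $(-10+28-18)p^2X\equiv0$, the vanishing of $-p^3H(4;\frac{p-1}2)$, and the reduction of \eqref{1.1} to the single congruence $2A+2p(B+C+D)-pE\equiv-21X\pmod{p^2}$ all agree with the paper's endgame. But as a proof of the stated theorem the proposal is circular: the statement to be proved is all of Theorem~\ref{Th1.1}, and you take its first three assertions (the general congruences for $S_{p-1}(a)$ and $S_{\frac{p-1}2}(a)$, and \eqref{1.0}) as given inputs. The bulk of the paper's work lies exactly there --- the telescoping relation $S_n(a)-S_n(a-1)=-\frac2a+\frac2a\binom{a-1}{n}\binom{-a-1}{n}$, the mod $p^4$ expansions of $\binom{pt+k-1}{p-1}\binom{-pt-k-1}{p-1}$ and of its half-range analogue (Lemma~\ref{Lem2.3}), and the evaluation of $S_{p-1}(pt)$ and $S_{\frac{p-1}2}(pt)$ (Lemma~\ref{Lem2.4}) --- and none of it is addressed.

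Even granting those inputs, the closing computation is not carried out, and your claim that items (i)--(iv), (viii) and Kummer congruences reduce $A,B,C,D,E$ to polynomials in $B_{p-3}$ is incorrect. Modulo $p^2$ the sum $A$ contains the term $-\frac p2h_{31}$ with $h_{31}:=H(3,1;\frac{p-1}2)$, a weight-$4$ quantity the paper explicitly states it cannot evaluate; the same $h_{31}$ appears in $E$ modulo $p$, and $2A-pE\equiv-21X$ holds only because it cancels between the two terms --- a cancellation your piece-by-piece plan would not see. Likewise $B+C+D\equiv0\pmod p$ does not follow from interchanging summations and items (iii), (iv), (vii): the paper needs the \texttt{Sigma} identity $\sum_{k=1}^n\frac{\binom nk(-4)^k}{k^2\binom{2k}k}\sum_{j=1}^k\frac1{2j-1}=-2\sum_{k=1}^n\frac1k\sum_{j=1}^k\frac1{(2j-1)^2}$, the relation $H(1,3;\frac{p-1}2)\equiv4H(1,-3;p-1)\pmod p$ (Lemma~\ref{Lemzhongyao}, itself a nontrivial Bernoulli-number argument), and the Tauraso--Zhao congruences (55), (61), (62) for the alternating weight-$4$ sums $H(1,-3;p-1)$ and $H(-2,2;p-1)$, none of which are among the facts you invoke. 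These missing ingredients are precisely where the difficulty of \eqref{1.1} is concentrated.
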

\begin{remark} (\ref{1.1}) was conjectured by Z.-W. Sun \cite[Conjecture 1.1]{S11a}.
\end{remark}
In 2008, Z.-H. Sun \cite[Remark 4.1]{s2008} proved that for any prime $p>3$,
$$
\sum_{k=1}^{p-1}\frac{2^k}{k^3}\equiv-\frac13q^3_p(2)-\frac7{24}B_{p-3}\pmod p.
$$ 
Tauraso and Zhao \cite[(79)]{TZ} reproved the above congruence in 2010, Mattarei and Tauraso \cite{MT} also reproved this congruence in 2013. And Tauraso and Zhao \cite[Remark 7.7]{TZ} said that they not able to express $H(1,1,-1;p-1)\pmod{p^2}$ explicitly, but they found that it is equivalent to determining $\sum_{k=1}^{p-1}\frac{2^k}{k^3}\pmod{p^2}$.

\noindent In this paper, we give $\sum_{k=1}^{p-1}\frac{2^k}{k^3}\pmod{p^2}$.
\begin{theorem}\label{Th1.2} For any prime $p>5$, we have
$$
\sum_{k=1}^{p-1}\frac{2^k}{k^3}\equiv-\frac13q^3_p(2)+\frac74X+\frac5{12}pq^4_p(2)+\frac{7p}6q_p(2)B_{p-3}-\frac{3p}8\sum_{k=1}^{\frac{p-1}2}\frac{H_k}{k^3}\pmod{p^2}.
$$
\end{theorem}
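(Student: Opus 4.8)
\noindent\emph{Proof proposal.} The plan is to turn the binomial weight $2^k$ into harmonic data by a binomial transform, reducing $\sum_{k=1}^{p-1}2^k/k^3$ modulo $p^2$ to alternating multiple harmonic sums of weight $3$ --- one of depth $3$ and the rest already tabulated above --- and then to assemble.

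From $\binom{p-1}{k}=(-1)^k\prod_{j=1}^k(1-p/j)$ one has $(-1)^{k-1}\binom{p-1}{k}\equiv-(1-pH_k)\pmod{p^2}$, hence
\[
\sum_{k=1}^{p-1}(-1)^{k-1}\binom{p-1}{k}\frac{2^k}{k^3}\equiv-\sum_{k=1}^{p-1}\frac{2^k}{k^3}+p\sum_{k=1}^{p-1}\frac{2^kH_k}{k^3}\pmod{p^2}.
\]
The left side can be evaluated exactly: applying the operator $g(t)\mapsto\int_0^t g(u)\,du/u$ three times to $1-(1-t)^{p-1}=\sum_{k\ge1}(-1)^{k-1}\binom{p-1}{k}t^k$ and evaluating at $t=2$ gives $\sum_{k=1}^{p-1}(-1)^{k-1}\binom{p-1}{k}2^k/k^3$ as a $\mathbb{Q}$-combination of alternating multiple harmonic sums of $p-1$ of weight $3$ (after the change of variables $t=1-v$ the relevant integrand is polynomial, the lower endpoint $v=-1$ producing the alternating sums). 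Among these a $H(1,1,-1;p-1)$ occurs, together with $H(1,2;p-1),H(2,1;p-1),H(3;p-1)$ and depth-$1$ alternating sums; this is exactly the equivalence noted by Tauraso--Zhao. The term $p\sum_{k=1}^{p-1}2^kH_k/k^3$ is only needed modulo $p$, and one further application of the same device (now with $\binom{p-1}{k}\equiv(-1)^k\pmod p$ and the elementary base values $\sum_k(-1)^{k-1}\binom{p-1}{k}2^k/k=-\sum_k(-1)^{k-1}\binom{p-1}{k}2^kH_k=2\sum_{j=1}^{(p-1)/2}(2j-1)^{-1}$) reduces it modulo $p$ to weight-$4$ alternating sums of $p-1$, dispatched by (i),(iii),(v),(vi) and the stuffle relations modulo $p$.

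The heart of the matter is $H(1,1,-1;p-1)=\tfrac12\sum_{k=1}^{p-1}(-1)^k\left(H_{k-1}^2-H(2;k-1)\right)/k$ modulo $p^2$. Writing $(-1)^k=2[2\mid k]-1$, the $-1$ part is a non-alternating weight-$3$ sum of $p-1$, reducible by stuffle to $H(1,2;p-1),H(2,1;p-1),H(3;p-1),H_{p-1}$, all known modulo $p^2$ from (vii)--(viii); the even-index part is $\tfrac12\sum_{j=1}^{(p-1)/2}\left(H_{2j-1}^2-H(2;2j-1)\right)/j$, which via $H_{2j-1}=\tfrac12H_{j-1}+\sum_{i\le j}(2i-1)^{-1}$ and $H(2;2j-1)=\tfrac14H(2;j-1)+\sum_{i\le j}(2i-1)^{-2}$ becomes $\tfrac14H(1,1,1;\tfrac{p-1}2)$ plus half-range weight-$3$ ``odd-harmonic'' sums. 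Here $\tfrac14H(1,1,1;\tfrac{p-1}2)=\tfrac1{24}\left(H_{(p-1)/2}^3-3H_{(p-1)/2}H(2;\tfrac{p-1}2)+2H(3;\tfrac{p-1}2)\right)$ produces the leading $-\tfrac13q_p(2)^3$ via the classical $H_{(p-1)/2}\equiv-2q_p(2)+pq_p(2)^2\pmod{p^2}$ and the values of $H(2;\tfrac{p-1}2),H(3;\tfrac{p-1}2)$ from (ii),(viii), together with parts of the $pq_p(2)^4$- and $pq_p(2)B_{p-3}$-corrections; all of the half-range sums reduce to known quantities except one, which after simplification is $\sum_{k=1}^{(p-1)/2}H_k/k^3$ --- precisely the irreducible quantity occurring in \eqref{1.0}.

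To finish, substitute these evaluations into the first display, solve for $\sum_{k=1}^{p-1}2^k/k^3$, consolidate the remaining Bernoulli contribution into $\tfrac74X$ by Kummer's congruence $B_{2p-4}/(2p-4)\equiv B_{p-3}/(p-3)\pmod p$, and collect the order-$p$ terms; reducing modulo $p$ and using $\tfrac74X\equiv-\tfrac7{24}B_{p-3}\pmod p$ recovers the known congruence of Z.-H.\ Sun, as a check. I expect the chief obstacle to be the evaluation of $H(1,1,-1;p-1)$ modulo $p^2$ --- exactly the point left open in \cite{TZ} --- and, with it, the bookkeeping of which half-range (even-index) sums collapse to the quantities in (ii),(v)--(viii) and which survive, while retaining every $pq_p(2)^j$- and $pq_p(2)B_{p-3}$-correction.
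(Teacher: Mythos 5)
Your opening reduction is sound: from $\binom{p-1}{k}\equiv(-1)^k(1-pH_k)\pmod{p^2}$ and the iterated-integral evaluation of $\sum_{k}(-1)^{k-1}\binom{p-1}{k}2^k/k^3$ you correctly land on the Tauraso--Zhao equivalence between $\sum_{k=1}^{p-1}2^k/k^3$ and $H(1,1,-1;p-1)$ modulo $p^2$. The genuine gap is the step you then wave through: ``all of the half-range sums reduce to known quantities except one.'' That claim is the entire content of the theorem, and the reduction you sketch for it is circular. After your parity split, the even-indexed part of $H(1,1,-1;p-1)$ leaves you needing, modulo $p^2$, the sums $\sum_{j\le(p-1)/2}\frac1jH_{j-1}O_j$, $\sum_{j}\frac1jO_j^2$ and $\sum_{j}\frac1jQ_j$, where $O_j=\sum_{i\le j}(2i-1)^{-1}$ and $Q_j=\sum_{i\le j}(2i-1)^{-2}$. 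Converting these back to full range via $O_j=H_{2j}-\tfrac12H_j$ reintroduces $\sum_{k=1}^{p-1}(-1)^kH_{k-1}^2/k=2H(1,1,-1;p-1)+H(2,-1;p-1)$, i.e.\ exactly the unknown you set out to compute; stuffle and shuffle relations alone do not close this system, which is why Tauraso and Zhao left $H(1,1,-1;p-1)\pmod{p^2}$ open in the first place. (The same issue, one weight higher, affects your treatment of $p\sum_k 2^kH_k/k^3$: items (i), (iii), (v), (vi) do not by themselves dispatch $H(1,1,1,-1;p-1)$ or tie $H(1,3;\tfrac{p-1}2)$ to $H(1,-3;p-1)$, which is what Lemma \ref{Lemzhongyao} is for.)

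What breaks the circle in the paper is an input from outside the multiple-harmonic-sum algebra: the \texttt{Sigma} identity $\sum_{k=1}^{n}\binom nk(-4)^k/(k^2\binom{2k}k)=-2\sum_{k=1}^n\frac1kO_k$ at $n=\tfrac{p-1}2$, combined with the mod-$p^3$ expansion of $\binom{(p-1)/2}{k}(-4)^k/\binom{2k}k$ from \cite{S11a}. This yields an independent congruence expressing $\sum_j\frac1jO_j$ (equivalently $H(1,-1;p-1)$) in terms of $H_{\frac{p-1}2}^2$ and the three weight-4 odd-harmonic sums, which Lemma \ref{Lem2.6} (via a second such identity) shows cancel modulo $p$; meanwhile $H(1,-1;p-1)$ is linked to $\sum_k2^k/k^3$ through the expansion of $-2q_p(2)=\sum_k(-1)^k\binom{p-1}{k-1}/k$ and Tauraso--Zhao's (87). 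Solving the resulting linear relation for $\sum_k2^k/k^3$ -- rather than evaluating $H(1,1,-1;p-1)$ head-on -- is what actually produces the coefficients $\tfrac74X$, $\tfrac5{12}pq_p^4(2)$, $\tfrac{7p}6q_p(2)B_{p-3}$ and $-\tfrac{3p}8\sum_kH_k/k^3$. Until you exhibit an analogous independent identity, your outline identifies the right obstacle but does not overcome it.
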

\noindent We are going to prove Theorem \ref{Th1.1} in Section 2. Section 3 is devoted to proving Theorem \ref{Th1.2}. Our proofs make use of some combinatorial identities which were found by the package \texttt{Sigma} \cite{S} via the software \texttt{Mathematica}. The proof of Theorem \ref{Th1.1} is somewhat difficult and complex because it is rather convoluted. We are not able to know the exact result of 
$$\sum_{k=1}^{\frac{p-1}2}\frac{H_k}{k^3}\equiv H\left(1,3;\frac{p-1}2\right)\equiv4q_p(2)B_{p-3}-H\left(3,1;\frac{p-1}2\right)\pmod p,$$  
so we just keep it in the congruences of above and following.
\section{Proof of Theorem \ref{Th1.1}}
\setcounter{lemma}{0}
\setcounter{theorem}{0}
\setcounter{corollary}{0}
\setcounter{remark}{0}
\setcounter{equation}{0}
\setcounter{conjecture}{0}
\begin{lemma}\label{Lem2.3} Let $p>5$ be a prime, and let $t\in\mathbb{Z}_p$. If $k\in\{1,2,\ldots,p-1\}$, then
\begin{align*}
&\binom{pt+k-1}{p-1}\binom{-pt-k-1}{p-1}\\
&\equiv\frac{p^2t(t+1)}{k^2}\left(1+2pH_k-\frac{p}k-\frac{2pt}k\right)\pmod{p^4}.
\end{align*}
If $k\in\{1,2,\ldots,(p-1)/2\}$, then
\begin{align*}
&\binom{pt+k-1}{\frac{p-1}2}\binom{-pt-k-1}{\frac{p-1}2}\equiv\frac{pt}{k}\bigg(1-\frac{pt}k+2p\sum_{j=1}^k\frac1{2j-1}+\frac{p^2t^2}{k^2}\\
+&2p^2\left(\sum_{j=1}^k\frac1{2j-1}\right)^2-\frac{2p^2t}k\sum_{j=1}^k\frac1{2j-1}-4p^2t\sum_{j=1}^k\frac1{(2j-1)^2}\bigg)\pmod{p^4}.
\end{align*}
\end{lemma}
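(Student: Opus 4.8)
The two congruences are both purely computational expansions of products of binomial coefficients modulo $p^4$, and I would attack them by the same strategy: rewrite each binomial coefficient as a product over a range of integers, factor out the "main term," and expand the remaining factors using logarithmic differentiation (i.e. $\log(1+x)$-expansions of $\prod(1+pt/j)$-type products) carefully tracked to the needed $p$-adic order. For the first identity, note that for $1\le k\le p-1$,
\[
\binom{pt+k-1}{p-1}=\frac{(pt+k-1)(pt+k-2)\cdots(pt+1)(pt)}{(p-1)!}\cdot\frac{1}{pt+k-1-(p-1)+1}\cdot(\cdots),
\]
but more cleanly one writes $\binom{pt+k-1}{p-1}=\prod_{j=1}^{p-1}\frac{pt+k-j}{j}$ and $\binom{-pt-k-1}{p-1}=\prod_{j=1}^{p-1}\frac{-pt-k-j}{j}=(-1)^{p-1}\prod_{j=1}^{p-1}\frac{pt+k+j}{j}$, so that
\[
\binom{pt+k-1}{p-1}\binom{-pt-k-1}{p-1}=\prod_{j=1}^{p-1}\frac{(pt+k-j)(pt+k+j)}{j^2}=\prod_{j=1}^{p-1}\frac{(pt+k)^2-j^2}{j^2}.
\]
Separating the index $j=k$ (which contributes the factor $\frac{(pt+k)^2-k^2}{k^2}=\frac{p^2t^2+2ptk}{k^2}=\frac{pt(pt+2k)}{k^2}$, already of order $p$) and collecting the rest, I expand
\[
\prod_{\substack{1\le j\le p-1\\ j\ne k}}\frac{(pt+k)^2-j^2}{j^2}=\prod_{\substack{j\ne k}}\Bigl(\frac{k^2-j^2}{j^2}\Bigr)\Bigl(1+\frac{2ptk+p^2t^2}{k^2-j^2}\Bigr).
\]
The product $\prod_{j\ne k}\frac{k^2-j^2}{j^2}$ over $1\le j\le p-1$ telescopes to something congruent to $1\pmod p$ (it is essentially $\binom{k-1}{p-1}\binom{-k-1}{p-1}$, a product of $(i-j)/j$ factors that vanishes mod high powers of $p$ unless we are careful — actually one should instead expand the whole thing directly). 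The cleanest route: take $\log$ of $\prod_{j\ne k}\bigl(1+\tfrac{2ptk+p^2t^2}{k^2-j^2}\bigr)$, giving $\sum_{j\ne k}\bigl(\tfrac{2ptk}{k^2-j^2}+O(p^2)\bigr)$, and use the known evaluations $\sum_{j=1}^{p-1}\frac{1}{k^2-j^2}$, $\sum\frac{1}{(k^2-j^2)^2}$ in terms of harmonic sums $H_k$, $H(2;p-1)$ etc., all of which reduce by Wolstenholme-type congruences. Exponentiating and combining with the factored-out $\frac{pt(pt+2k)}{k^2}$ term (whose square already pushes $p^2t^2$ to order $p^3$, etc.) should collapse to $\frac{p^2t(t+1)}{k^2}\bigl(1+2pH_k-\tfrac pk-\tfrac{2pt}k\bigr)$ after discarding $O(p^4)$; the appearance of $t(t+1)$ rather than $t^2$ comes from the leading contribution $\frac{pt\cdot 2k}{k^2}=\frac{2pt}{k}$ interacting with a $\frac{pt}{k}$-type correction from the $j=k$ factor and a shift by one in the product range.

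For the second identity the setup is the same but with the upper index $(p-1)/2$ in place of $p-1$, so only the factors $j=1,\dots,(p-1)/2$ appear, and the products run over $(pt+k\pm j)$ with $1\le j\le (p-1)/2$; the index $j=k$ again gets pulled out, contributing $\frac{pt(pt+2k)}{k^2}$-analog but now the "main term" is only of order $p$ (hence the single power $\frac{pt}{k}$ in the statement), and the surviving product, after taking logs, produces sums $\sum_{1\le j\le(p-1)/2,\,j\ne k}\frac{1}{k^2-j^2}$-type expressions which — after partial fractions $\frac{1}{k^2-j^2}=\frac{1}{2k}\bigl(\frac{1}{k-j}-\frac{1}{k+j}\bigr)$ — rearrange into the odd-denominator sums $\sum_{j=1}^k\frac{1}{2j-1}$ and $\sum_{j=1}^k\frac{1}{(2j-1)^2}$ visible in the target. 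I would track everything to order $p^2$ inside the parenthesis (since the prefactor is $pt/k$), expand the exponential $\exp(pA+p^2B)\equiv 1+pA+p^2B+\tfrac12p^2A^2$, and match term by term.

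The main obstacle is bookkeeping: keeping the $p$-adic orders straight when one factor is already $O(p)$ (so its square is $O(p^2)$ but still relevant mod $p^4$ after multiplication by the $O(p^2)$ or $O(p)$ prefactor), and correctly evaluating the auxiliary sums $\sum_{j\ne k}\frac{1}{k^2-j^2}$, $\sum_{j\ne k}\frac{j}{(k^2-j^2)}$, $\sum_{j\ne k}\frac{1}{(k^2-j^2)^2}$ over the two ranges $\{1,\dots,p-1\}$ and $\{1,\dots,(p-1)/2\}$, reducing them modulo the appropriate power of $p$ using the known facts (i)–(viii) quoted in the introduction (in particular $H_{p-1}\equiv 0$, $H(2;p-1)\equiv 0\pmod p$, and the half-range evaluations). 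Once those auxiliary evaluations are in hand, the rest is a mechanical — if lengthy — expansion, and I expect no conceptual difficulty, only the need for care with signs and with the off-by-one in the product ranges. I would double-check the final forms by specialising to $t=0$ (where both sides must vanish to the stated order, forcing $t(t+1)$ and $t$ as overall factors) and by a numerical spot-check for a small prime such as $p=7$ or $p=11$.
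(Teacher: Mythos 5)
Your reduction of the product to $\prod_{j=1}^{p-1}\frac{(pt+k)^2-j^2}{j^2}$ is correct, but the first part of your plan then fails at a specific index you never isolate: $j=p-k$. There $k^2-(p-k)^2=p(2k-p)$, so $(pt+k)^2-(p-k)^2=p(t+1)(pt+2k-p)$ is divisible by $p$; the product over $1\le j\le p-1$ therefore contains \emph{two} factors of order $p$ (at $j=k$ and at $j=p-k$), and this — not the ``interaction with a $pt/k$-type correction and a shift by one in the product range'' you describe — is the source of the prefactor $p^2t(t+1)$. Concretely, after you pull out only $j=k$, the remaining factor at $j=p-k$ is
$1+\frac{2ptk+p^2t^2}{k^2-(p-k)^2}=\frac{(1+t)\bigl(2k-p(1-t)\bigr)}{2k-p}$,
which is not of the form $1+O(p)$, so the logarithmic expansion cannot be applied there, and the auxiliary sums $\sum_{j\ne k}\frac{1}{k^2-j^2}$ and $\sum_{j\ne k}\frac{1}{(k^2-j^2)^2}$ over $1\le j\le p-1$ on which your plan rests are not even $p$-integral (the $j=p-k$ term has denominator divisible by $p$). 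You half-notice the difficulty (``vanishes mod high powers of $p$ unless we are careful'') but do not resolve it. The repair is what the paper does: treat the two binomial coefficients separately, observing that $\binom{pt+k-1}{p-1}$ contains the single factor $pt$, giving $\frac{pt}{k}\bigl(1+pH_k-\frac{pt}{k}\bigr)\pmod{p^3}$, while $\binom{-pt-k-1}{p-1}$ contains the single factor $p(t+1)$, giving $\frac{p(t+1)}{k}\bigl(1+pH_{k-1}-\frac{pt}{k}\bigr)\pmod{p^3}$, and then multiply.

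For the half-range congruence your structural observation is right (only $j=k$ is special since $p-k>(p-1)/2$, whence the single prefactor $pt/k$), and partial fractions do naturally produce the odd-denominator sums. But because the prefactor is only of order $p$, you need the remaining product to relative order $p^3$, which requires second-order expansions of $H_{(p-1)/2\pm k}$ together with an evaluation of the constant ratio $(-1)^{k}\binom{(p-1)/2+k}{k}\big/\binom{(p-1)/2}{k}$ — a genuine binomial ratio, not $1$ — modulo $p^3$. The paper imports exactly these facts from Z.-W. Sun's work ([Su3, (3.2)] and [S11a, Lemma 4.2]) rather than rederiving them; your sketch leaves all of this as unexecuted ``bookkeeping,'' so as written neither congruence is actually established.
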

\begin{proof} It is easy to check that
\begin{align*}
&\binom{pt+k-1}{p-1}=\frac{(pt+k-1)\cdots(pt+1)pt(pt-1)\cdots(pt+k-p+1)}{(p-1)!}\\
&\equiv\frac{pt(k-1)!(1+ptH_{k-1})(-1)^{p-1-k}(p-1-k)!(1-ptH_{p-1-k})}{(p-1)!}\\
&\equiv\frac{pt}k\left(1+pH_k-\frac{pt}k\right)\pmod{p^3},
\end{align*}
and by (i), we have
\begin{align*}
&\binom{-pt-k-1}{p-1}\\
&=\frac{(pt+k+1)\cdots(pt+p-1)p(t+1)(pt+p+1)\cdots(pt+p+k-1)}{(p-1)!}\\
&\equiv\frac{p(t+1)(p-1)!(1+pt(H_{p-1}-H_k))(k-1)!(1+p(t+1)H_{k-1})}{k!(p-1)!}\\
&\equiv\frac{p(t+1)}k\left(1+pH_{k-1}-\frac{pt}k\right)\pmod{p^3},
\end{align*}
hence
$$
\binom{pt+k-1}{p-1}\binom{-pt-k-1}{p-1}\equiv\frac{p^2t(t+1)}{k^2}\left(1+2pH_k-\frac{p}k-\frac{2pt}k\right)\pmod{p^4}.
$$
Similarly,
\begin{align*}
&\binom{pt+k-1}{\frac{p-1}2}=\frac{(pt+k-1)\cdots(pt+1)pt(pt-1)\cdots(pt+k-\frac{p-1}2)}{(\frac{p-1}2)!}\\
&\equiv\frac{pt(k-1)!(1+ptH_{k-1}+p^2t^2H(1,1;k-1))(-1)^{\frac{p-1}2-k}(\frac{p-1}2-k)!}{(\frac{p-1}2)!}\\
&\ \ \ \ \ \ \ \ \ \ \ \times(1-ptH_{\frac{p-1}2-k}+p^2t^2H(1,1;\frac{p-1}2-k))\\
&\equiv\frac{pt(-1)^{\frac{p-1}2-k}}{k\binom{\frac{p-1}2}k}\bigg(1+ptH_{k-1}-ptH_{\frac{p-1}2-k}-p^2t^2H_{k-1}H_{\frac{p-1}2-k}\\
&\ \ \ \ \ \ \ \ \ \ \ \ \  \ +p^2t^2H(1,1;k-1)+p^2t^2H(1,1;\frac{p-1}2-k)\bigg)\pmod{p^4}
\end{align*}
and
\begin{align*}
&\binom{-pt-k-1}{\frac{p-1}2}=\frac{(-1)^{\frac{p-1}2}(pt+k+1)\cdots(pt+k+\frac{p-1}2)}{(\frac{p-1}2)!}\\
&\equiv(-1)^{\frac{p-1}2}\binom{\frac{p-1}2+k}{k}\bigg(1+ptH_{\frac{p-1}2+k}-ptH_k-p^2t^2H_kH_{\frac{p-1}2+k}\\
&+p^2t^2H_k^2-p^2t^2H(1,1;k)+p^2t^2H(1,1;\frac{p-1}2+k)\bigg)\pmod{p^3},
\end{align*}
Hence by the fact that $H_{p-1-k}\equiv H_k\pmod p$, $H(2;p-1-k)\equiv-H(2;k)\pmod p$ and
$$
H(1,1;k)-H(1,1;k-1)=\frac12(H_k^2-H_{k-1}^2-H(2;k)+H(2;k-1))=\frac{H_k}k-\frac1{k^2}
$$
we have
\begin{align*}
&\binom{pt+k-1}{\frac{p-1}2}\binom{-pt-k-1}{\frac{p-1}2}\\
&\equiv\frac{pt(-1)^k\binom{\frac{p-1}2+k}{k}}{k\binom{\frac{p-1}2}k}\left(1-\frac{pt}k+\frac{p^2t^2}{k^2}+ptH_{\frac{p-1}2+k}-ptH_{\frac{p-1}2-k}\right)\pmod{p^4}
\end{align*}
In view of \cite[(3.2)]{Su3} and \cite[Lemma 4.2]{S11a}, we have
$$(-1)^k\binom{\frac{p-1}2+k}{k}\binom{\frac{p-1}2}k\left(1-\frac{p}4(H_{\frac{p-1}2+k}-H_{\frac{p-1}2-k})\right)\equiv\frac{\binom{2k}k^2}{16^k}\pmod{p^4}$$
and
$$
\frac{\binom{\frac{p-1}2}k^216^k}{\binom{2k}k^2}\equiv1-2p\sum_{j=1}^k\frac1{2j-1}+2p^2\left(\sum_{j=1}^k\frac1{2j-1}\right)^2-p^2\sum_{j=1}^k\frac1{(2j-1)^2}\pmod{p^3}.
$$
Thus by the first congruence in \cite[pp 16]{Su3}, we have
\begin{align*}
&\frac{(-1)^k\binom{\frac{p-1}2+k}{k}}{k\binom{\frac{p-1}2}k}=\frac{(-1)^k\binom{\frac{p-1}2+k}{k}\binom{\frac{p-1}2}k}{k\binom{\frac{p-1}2}k^2}\\
&\equiv1+2p\sum_{j=1}^k\frac1{2j-1}+2p^2\left(\sum_{j=1}^k\frac1{2j-1}\right)^2\pmod{p^3}.
\end{align*}
Therefore by the first congruence in \cite[pp 16]{Su3} again, we immediately obtain that
\begin{align*}
&\binom{pt+k-1}{\frac{p-1}2}\binom{-pt-k-1}{\frac{p-1}2}\equiv\frac{pt}{k}\bigg(1-\frac{pt}k+2p\sum_{j=1}^k\frac1{2j-1}+\frac{p^2t^2}{k^2}\\
+&2p^2\left(\sum_{j=1}^k\frac1{2j-1}\right)^2-\frac{2p^2t}k\sum_{j=1}^k\frac1{2j-1}-4p^2t\sum_{j=1}^k\frac1{(2j-1)^2}\bigg)\pmod{p^4}.
\end{align*}
These complete the proof of Lemma \ref{Lem2.3}.
\end{proof}
\noindent{\it Proof of Theorem \ref{Th1.1}}. Set $S_n(a)=\sum_{k=1}^n\frac{1}k\binom{a}k\binom{-1-a}k$, then in view of \cite[(2.1)]{s2015}, we have
$S_n(a)-S_n(a-1)=-\frac2{a}+\frac2a\binom{a-1}{n}\binom{-a-1}n.$ Thus, by Lemma \ref{Lem2.3}, we have
\begin{align}\label{p-1S}
&S_{p-1}(a)-S_{p-1}(a-\langle a\rangle_p)=\sum_{k=0}^{\langle a\rangle_p-1}(S_{p-1}(a-k)-S_{p-1}(a-k-1))\notag\\
&=2\sum_{k=0}^{\langle a\rangle_p-1}\left(\frac{-1}{a-k}+\frac1{a-k}\binom{a-k-1}{p-1}\binom{-a+k-1}{p-1}\right)\notag\\
&\equiv2\sum_{k=1}^{\langle a\rangle_p}\left(\frac{-1}{pt+k}+\frac{1}{pt+k}\binom{pt+k-1}{p-1}\binom{-pt-k-1}{p-1}\right)\notag\\
&\equiv2\sum_{k=1}^{\langle a\rangle_p}\frac{-1}{(pt+k)^3}+2p^2t(t+1)\bigg(H(3;\langle a\rangle_p)-(3pt+p)H(4;\langle a\rangle_p)\notag\\
&\ \ \ \ \ \ \ \ \ \ \ \ \ \ \ \ \ \ \ \ \ \ \ \ \ \ \ \ \ \ \ \ \ \ \ \ \ \ \ \ \ \ +2p\sum_{k=1}^{\langle a\rangle_p}\frac{H_k}{k^3}\bigg)\pmod{p^4}.
\end{align}
Similarly, by Lemma \ref{Lem2.3}, we have
\begin{align}\label{p-12S}
&S_{\frac{p-1}2}(a)-S_{\frac{p-1}2}(a-\langle a\rangle_p)=\sum_{k=0}^{\langle a\rangle_p}(S_{\frac{p-1}2}(a-k)-S_{\frac{p-1}2}(a-k-1))\notag\\
&=2\sum_{k=0}^{\langle a\rangle_p-1}\left(\frac{-1}{a-k}+\frac1{a-k}\binom{a-k-1}{\frac{p-1}2}\binom{-a+k-1}{\frac{p-1}2}\right)\notag\\
&=2\sum_{k=1}^{\langle a\rangle_p}\left(\frac{-1}{pt+k}+\frac{1}{pt+k}\binom{pt+k-1}{\frac{p-1}2}\binom{-pt-k-1}{\frac{p-1}2}\right)\notag\\
&\equiv2\sum_{k=1}^{\langle a\rangle_p}\frac{-1}{pt+k}+2ptH(2;\langle a\rangle_p)-4p^2t^2H(3;\langle a\rangle_p)+6p^3t^3H(4;\langle a\rangle_p)\notag\\
&+4p^2t\sum_{k=1}^{\langle a\rangle_p}\frac1{k^2}\sum_{j=1}^k\frac1{2j-1}-8p^3t^2\sum_{k=1}^{\langle a\rangle_p}\frac1{k^3}\sum_{j=1}^k\frac1{2j-1}\notag\\
&+4p^3t\sum_{k=1}^{\langle a\rangle_p}\frac1{k^2}\left(\sum_{j=1}^k\frac1{2j-1}\right)^2-8p^3t^2\sum_{k=1}^{\langle a\rangle_p}\frac{1}{k^2}\sum_{j=1}^k\frac1{(2j-1)^2}\pmod{p^4}.
\end{align}
\begin{lemma}\label{Lem2.4} Let $p>5$ be a prime, $t=\frac{a-\langle a\rangle_p}p\in\mathbb{Z}_p$ and set $X$ as above in (vii). Then
\begin{align*}
S_{p-1}(a-\langle a\rangle_p)=S_{p-1}(pt)\equiv4p^2tX\pmod{p^4},
\end{align*}
\begin{align*}
S_{\frac{p-1}2}(a-\langle a\rangle_p)=S_{\frac{p-1}2}(pt)\equiv-12p^2t^2X+14p^2tX\pmod{p^4}.
\end{align*}
\end{lemma}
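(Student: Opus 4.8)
The plan is to evaluate $S_n(pt)=\sum_{k=1}^n\frac1k\binom{pt}{k}\binom{-1-pt}{k}$ directly, by a $p$-adic expansion in the small parameter $pt$, and then to specialize to $n=p-1$ and $n=(p-1)/2$ using the evaluations of $H(2;\cdot)$, $H(3;\cdot)$ and $H(2,2;\cdot)$ recorded in (i), (ii), (vii) and (viii). First I would note that, for $1\le k\le p-1$, one has (entirely inside $\mathbb{Z}_p$, since $k<p$)
$$\binom{pt}{k}=\frac{(-1)^{k-1}pt}{k}\prod_{j=1}^{k-1}\left(1-\frac{pt}{j}\right),\qquad \binom{-1-pt}{k}=(-1)^{k}\prod_{j=1}^{k}\left(1+\frac{pt}{j}\right),$$
so that, pairing $(1-pt/j)(1+pt/j)=1-p^2t^2/j^2$ and using that the leading factor $pt/k^2$ makes it enough to expand the product modulo $p^3$,
$$\frac1k\binom{pt}{k}\binom{-1-pt}{k}=-\frac{pt}{k^2}\left(1+\frac{pt}{k}\right)\prod_{j=1}^{k-1}\left(1-\frac{p^2t^2}{j^2}\right)\equiv-\frac{pt}{k^2}-\frac{p^2t^2}{k^3}+\frac{p^3t^3H(2;k-1)}{k^2}\pmod{p^4}.$$
Summing over $k$ and using $\sum_{k=1}^nH(2;k-1)/k^2=H(2,2;n)$ would give the master formula
$$S_n(pt)\equiv-pt\,H(2;n)-p^2t^2\,H(3;n)+p^3t^3\,H(2,2;n)\pmod{p^4},$$
valid for $n=p-1$ and, the same computation applying since every index is again below $p$, also for $n=(p-1)/2$.

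It then remains to substitute the two values of $n$. For $n=p-1$: by (viii), $H(2;p-1)\equiv-4pX\pmod{p^3}$, hence $-pt\,H(2;p-1)\equiv4p^2tX\pmod{p^4}$; by (i) (applied with $a=3$, $r=1$) $H(3;p-1)$ is divisible by $p^2$ and by (vii) $H(2,2;p-1)$ is divisible by $p$, so the last two terms of the master formula vanish modulo $p^4$, giving $S_{p-1}(pt)\equiv4p^2tX\pmod{p^4}$. For $n=(p-1)/2$: by (viii), $H(2;\frac{p-1}2)\equiv-14pX\pmod{p^3}$ and $H(3;\frac{p-1}2)\equiv12X\pmod{p^2}$, contributing $14p^2tX$ and $-12p^2t^2X$ respectively; and since $2H(2,2;\frac{p-1}2)=H(2;\frac{p-1}2)^2-H(4;\frac{p-1}2)$ with both $H(2;\frac{p-1}2)$ and $H(4;\frac{p-1}2)$ divisible by $p$ (the latter by (ii) with $a=4$), one gets $H(2,2;\frac{p-1}2)\equiv0\pmod p$, so the last term again vanishes modulo $p^4$. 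This would yield $S_{\frac{p-1}2}(pt)\equiv-12p^2t^2X+14p^2tX\pmod{p^4}$, as claimed.

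The hard part here is not the algebra but the $p$-adic bookkeeping: each auxiliary harmonic sum must be carried to exactly the precision needed (mod $p^3$ for the weight-two sums, mod $p^2$ for the weight-three sum, mod $p$ for $H(2,2;\cdot)$) so that every error term is absorbed modulo $p^4$, and one must be careful that the multiplications by powers of $p$ preserve the stated accuracy. The only ingredient not already recorded in the preliminaries, namely $H(2,2;(p-1)/2)\equiv0\pmod p$, follows at once from the Newton-type identity $2H(2,2;m)=H(2;m)^2-H(4;m)$ together with (ii) and (viii).
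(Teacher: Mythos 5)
Your proposal is correct and follows essentially the same route as the paper: the same $p$-adic expansion $\frac1k\binom{pt}{k}\binom{-1-pt}{k}\equiv-\frac{pt}{k^2}\left(1+\frac{pt}{k}\right)\left(1-p^2t^2H(2;k-1)\right)\pmod{p^4}$, the same master formula $S_n(pt)\equiv-ptH(2;n)-p^2t^2H(3;n)+p^3t^3H(2,2;n)$, and the same substitutions from (i), (ii), (vii), (viii), including the identity $2H\left(2,2;\frac{p-1}2\right)=H\left(2;\frac{p-1}2\right)^2-H\left(4;\frac{p-1}2\right)$.
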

\begin{proof} It is easy to see that
\begin{align*}
&\binom{pt}k\binom{-1-pt}k=\frac{pt(pt+k)(-1)^k}{k!^2}\prod_{j=1}^{k-1}(p^2t^2-j^2)\\
&\equiv-\frac{pt}k\left(1+\frac{pt}k\right)(1-p^2t^2H(2;k-1))\pmod{p^4}.
\end{align*}
Thus, by (i), (vii) and (viii), we have
\begin{align*}
&S_{p-1}(pt)=\sum_{k=1}^{p-1}\frac{1}{k}\binom{pt}k\binom{-1-pt}k\\
&\equiv-ptH(2;p-1)-p^2t^2H(3;p-1)+p^3t^3H(2,2;p-1)\\
&\equiv4p^2tX\pmod{p^4}.
\end{align*}
Similarly, by the identity $2H(2,2;\frac{p-1}2)=H(2;\frac{p-1}2)^2-H(4;\frac{p-1}2)$, (ii) and (viii), we have
\begin{align*}
&S_{\frac{p-1}2}(pt)\\
&\equiv-ptH\left(2;\frac{p-1}2\right)-p^2t^2H\left(3;\frac{p-1}2\right)+p^3t^3H\left(2,2;\frac{p-1}2\right)\\
&\equiv-12p^2t^2X+14p^2tX\pmod{p^4}.
\end{align*}
These prove Lemma \ref{Lem2.4}.
\end{proof}
\begin{lemma}\label{Lem2.5} For all prime $p>5$ , $h_{31}:=H(3,1,\frac{p-1}2)$, and set $X$ as above. Then
\begin{align}
&\sum_{k=1}^{\frac{p-1}2}\frac{1}{k^2}\sum_{j=1}^k\frac1{2j-1}\equiv-\frac{21}2X+2pq_p(2)B_{p-3}-\frac{p}2h_{31}\pmod {p^2},\label{ds1}\\
&\sum_{k=1}^{\frac{p-1}2}\frac{H_k}{k^3}\equiv4q_p(2)B_{p-3}-h_{31}\pmod p,\label{ds2}\\
&\sum_{k=1}^{\frac{p-1}2}\frac1k\sum_{j=1}^k\frac1{(2j-1)^2}\notag\\
&\equiv\frac{21}4X-pq_p(2)B_{p-3}+pH(1,-3;p-1)+\frac{p}4h_{31}\pmod{p^2}.\label{ds3}
\end{align}
\end{lemma}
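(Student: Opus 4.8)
The plan is to dispatch \eqref{ds2} first — it is the only mod-$p$ statement — and then run the same circle of ideas at the $p^2$ level for \eqref{ds1} and \eqref{ds3}. For \eqref{ds2}, writing $H_k=\sum_{j=1}^k\frac1j$ gives
$$\sum_{k=1}^{(p-1)/2}\frac{H_k}{k^3}=\sum_{1\le j\le k\le(p-1)/2}\frac1{jk^3}=H\!\left(1,3;\tfrac{p-1}2\right)+H\!\left(4;\tfrac{p-1}2\right),$$
and the stuffle identity $H_nH(3;n)=H(1,3;n)+H(3,1;n)+H(4;n)$ at $n=(p-1)/2$ converts this into $H_{(p-1)/2}\,H\!\left(3;\tfrac{p-1}2\right)-h_{31}$. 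Since (ii) gives $H_{(p-1)/2}\equiv-2q_p(2)$ and $H\!\left(3;\tfrac{p-1}2\right)\equiv-2B_{p-3}\pmod p$, the product is $\equiv4q_p(2)B_{p-3}\pmod p$, which establishes \eqref{ds2} (this repeats the congruence recorded in the introduction).

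For \eqref{ds1} and \eqref{ds3} the first step is to linearize the odd-index inner sums: using $\sum_{\ell\ \mathrm{odd},\,\ell<m}\ell^{-s}=\tfrac12\!\left(H(s;m-1)-H(-s;m-1)\right)$, then $k^{-2}=4(2k)^{-2}$ and $k^{-1}=2(2k)^{-1}$, and splitting the (now even) outer index $m$ by $\tfrac12\!\left(1+(-1)^m\right)$, one rewrites the left side of \eqref{ds1} as
$$H(1,2;p-1)+H(1,-2;p-1)-H(-1,2;p-1)-H(-1,-2;p-1)$$
and the left side of \eqref{ds3} as $\tfrac12\!\left(H(2,1;p-1)+H(2,-1;p-1)-H(-2,1;p-1)-H(-2,-1;p-1)\right)$. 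The pieces with a non-alternating first entry come straight from (vii): $H(1,2;p-1)\equiv-6X$, $H(2,1;p-1)\equiv6X$, $H(1,-2;p-1)\equiv-\tfrac32X$, $H(-3;p-1)\equiv3X$, and $H(2,-1;p-1)\equiv-\tfrac32X-\tfrac76pq_p(2)B_{p-3}+pH(1,-3;p-1)\pmod{p^2}$; the last is exactly what puts $pH(1,-3;p-1)$ into \eqref{ds3}.

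The remaining sums $H(-1,2;p-1),H(-1,-2;p-1),H(-2,1;p-1),H(-2,-1;p-1)$ I would handle with the depth-one stuffle relations $H(\pm1;p-1)H(\pm2;p-1)$. Since $H_{p-1}\equiv H(3;p-1)\equiv0\pmod{p^2}$ by (i), $H(2;p-1)\equiv-4pX\pmod{p^3}$ by (viii), $H(-2;p-1)\equiv\tfrac12pB_{p-3}\pmod{p^2}$ by (v), and $H(-1;p-1)\equiv H_{(p-1)/2}\pmod{p^2}$, these collapse almost everything — e.g.\ $H(-2,1;p-1)\equiv-\tfrac32X$ and $H(-1,2;p-1)\equiv-\tfrac32X+8pq_p(2)X+\tfrac76pq_p(2)B_{p-3}-pH(1,-3;p-1)\pmod{p^2}$. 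The one combination the stuffle relations leave undetermined is $H(-1,-2;p-1)$ (equivalently $H(-2,-1;p-1)$); when re-expanded over the range $1\le k\le(p-1)/2$ this is precisely the sum that carries $H(3,1;\frac{p-1}2)=h_{31}$, which is not known modulo $p$. To close the system I would invoke the combinatorial identity produced by \texttt{Sigma} that evaluates this last sum modulo $p^2$ in terms of $X$, $q_p(2)B_{p-3}$, $H(1,-3;p-1)$ and $h_{31}$; assembling everything and carefully propagating all $p$-divisible (weight-four) correction terms then yields the three congruences. As an internal consistency check, the reduction forces the left side of \eqref{ds1} plus twice that of \eqref{ds3} to be $\equiv2pH(1,-3;p-1)\pmod{p^2}$, in agreement with the stated right-hand sides.

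The main obstacle is the $p^2$ refinement, not the mod-$p$ skeleton (which is essentially (iii), (iv), (vii)): the naive even/odd splitting is circular — $H(-1,-2;p-1)$ re-expands into the very sum on the left of \eqref{ds1} — so the extra \texttt{Sigma} identity is genuinely needed to break the loop, and after that the bookkeeping of all the $p\cdot(\mathrm{weight}\ 4)$ contributions, and getting the rational coefficients $-\tfrac{21}2,\ \tfrac{21}4$ and the coefficients of $pq_p(2)B_{p-3}$, $pH(1,-3;p-1)$, $ph_{31}$ exactly right, is where the real work lies.
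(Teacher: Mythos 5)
Your treatment of \eqref{ds2} is exactly the paper's: the stuffle identity $\sum_{k\le n}H_k/k^3=H_nH(3;n)-H(3,1;n)$ at $n=(p-1)/2$ together with (ii). For \eqref{ds1} and \eqref{ds3}, your parity decomposition into the full-range sums $H(\pm1,\pm2;p-1)$ and $H(\pm2,\pm1;p-1)$ is a legitimate and essentially equivalent reorganization of what the paper does by writing $\sum_{j=1}^k\frac1{2j-1}=H_{2k}-\tfrac12H_k$; your stuffle reductions and the inputs from (v), (vii), (viii) check out, and your observation that the combination (left side of \eqref{ds1}) $+\,2\cdot$(left side of \eqref{ds3}) $\equiv 2pH(1,-3;p-1)\pmod{p^2}$ does indeed follow from those ingredients alone.

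However, there is a genuine gap at precisely the point you flag yourself. The stuffle relation only gives $H(-1,-2;p-1)+H(-2,-1;p-1)\equiv H(-1;p-1)H(-2;p-1)-H(3;p-1)\equiv-pq_p(2)B_{p-3}\pmod{p^2}$, not the two terms separately, and it is exactly this separation (equivalently, after re-splitting by parity, the value of $H(1,2;\frac{p-1}2)-H(2,1;\frac{p-1}2)$ modulo $p^2$) that injects $h_{31}$ with the coefficients $-\tfrac{p}2$ and $\tfrac{p}4$ into \eqref{ds1} and \eqref{ds3}. You defer this to ``the combinatorial identity produced by \texttt{Sigma} that evaluates this last sum,'' but no such identity is stated or proved, so your argument as written only determines the combination of the two congruences, not either one individually. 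The paper closes the loop differently: it imports $H\left(1,2;\frac{p-1}2\right)-H\left(2,1;\frac{p-1}2\right)\equiv-6X-\tfrac{10}3pq_p(2)B_{p-3}+2ph_{31}\pmod{p^2}$ from Tauraso--Zhao \cite[(112)--(115)]{TZ} and combines it with the elementary reversal identity $H\left(1,2;\frac{p-1}2\right)+H\left(2,1;\frac{p-1}2\right)=H_{\frac{p-1}2}H\left(2;\frac{p-1}2\right)-H\left(3;\frac{p-1}2\right)$, evaluated via (ii) and (viii). To complete your proof you must either cite such a result for the missing alternating double sum or prove it; everything else in your outline is sound.
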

\begin{proof} It is easy to see that
\begin{align*}
&\sum_{k=1}^{\frac{p-1}2}\frac{H_{2k}}{k^2}=2\sum_{k=1}^{p-1}\frac{(1+(-1)^k)H_k}{k^2}\\
=&2(H(1,2;p-1)+H(3;p-1)+H(1,-2;p-1)+H(-3;p-1)).
\end{align*}
So in view of (i) and (vii), we have
$$
\sum_{k=1}^{\frac{p-1}2}\frac{H_{2k}}{k^2}\equiv-9X\pmod {p^2}.
$$
It is easy to see that
\begin{align*}
&H\left(1,2;\frac{p-1}2\right)+H\left(3;\frac{p-1}2\right)=\sum_{k=1}^{\frac{p-1}2}\frac{H_{k}}{k^2}=\sum_{k=1}^{\frac{p-1}2}\frac{1}{k^2}\sum_{j=1}^k\frac1j\\
&=\sum_{j=1}^{\frac{p-1}2}\frac1j\sum_{k=j}^{\frac{p-1}2}\frac1{k^2}=H_{\frac{p-1}2}H\left(2;\frac{p-1}2\right)-H\left(2,1;\frac{p-1}2\right).
\end{align*}
This with (ii) and (viii) yields that
\begin{align*}
&H\left(1,2;\frac{p-1}2\right)+H\left(2,1;\frac{p-1}2\right)\\
&=H_{\frac{p-1}2}H\left(2;\frac{p-1}2\right)-H\left(3;\frac{p-1}2\right)\\
&\equiv-\frac{14}3pq_p(2)B_{p-3}-12X\pmod{p^2}.
\end{align*}
In view of \cite[(112)-(115)]{TZ}, we have
\begin{align*}
&H\left(1,2;\frac{p-1}2\right)-H\left(2,1;\frac{p-1}2\right)\\
&\equiv-6X-\frac{10}3pq_p(2)B_{p-3}+2ph_{31}\pmod{p^2}.
\end{align*}
Thus,
$$
H\left(1,2;\frac{p-1}2\right)\equiv-9X-4pq_p(2)B_{p-3}+ph_{31}\pmod{p^2}
$$
and
\begin{align}\label{21p-12}
H\left(2,1;\frac{p-1}2\right)\equiv-3X-\frac23pq_p(2)B_{p-3}-ph_{31}\pmod{p^2}.
\end{align}
So by (viii), we have
\begin{align*}
\sum_{k=1}^{\frac{p-1}2}\frac{H_{k}}{k^2}&=H\left(1,2;\frac{p-1}2\right)+H\left(3;\frac{p-1}2\right)\\
&\equiv3X-4pq_p(2)B_{p-3}+ph_{31}\pmod{p^2}.
\end{align*}
Hence
\begin{align*}
\sum_{k=1}^{\frac{p-1}2}\frac{1}{k^2}\sum_{j=1}^k\frac1{2j-1}&=\sum_{k=1}^{\frac{p-1}2}\frac{H_{2k}-\frac12H_k}{k^2}\\
&\equiv-\frac{21}2X+2pq_p(2)B_{p-3}-\frac{p}2h_{31}\pmod{p^2}.
\end{align*}
Similarly,
\begin{align*}
\sum_{k=1}^{\frac{p-1}2}\frac{H_k}{k^3}&=H_{\frac{p-1}2}H\left(3;\frac{p-1}2\right)-H\left(3,1;\frac{p-1}2\right)\\
&\equiv4q_p(2)B_{p-3}-h_{31}\pmod p
\end{align*}
and 
\begin{align*}
&\sum_{k=1}^{\frac{p-1}2}\frac1k\sum_{j=1}^k\frac1{(2j-1)^2}=\sum_{k=1}^{p-1}\frac{1+(-1)^k}kH(2;k)-\frac14\sum_{k=1}^{\frac{p-1}2}\frac{H(2;k)}k\\
&=H(2,1;p-1)+H(3;p-1)+H(2,-1;p-1)+H(-3;p-1)\\
&\ \ \ \ \ -\frac14H\left(2,1;\frac{p-1}2\right)-\frac14H\left(3;\frac{p-1}2\right).
\end{align*}
By (\ref{21p-12}), (i), (vii) and (viii), we have
\begin{align*}
&\sum_{k=1}^{\frac{p-1}2}\frac1k\sum_{j=1}^k\frac1{(2j-1)^2}\notag\\
&\equiv\frac{21}4X-pq_p(2)B_{p-3}+pH(1,-3;p-1)+\frac{p}4h_{31}\pmod{p^2}.
\end{align*}
These complete the proof of Lemma \ref{Lem2.5}.
\end{proof}
\begin{lemma}\label{Lemzhongyao} Let $p>5$ be a prime. Then
$$
H\left(1,3;\frac{p-1}2\right)\equiv4H(1,-3;p-1)\pmod p.
$$
\end{lemma}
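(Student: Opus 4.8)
The plan is to sandwich the two sides of the congruence around the auxiliary fact
\[
\sum_{k=1}^{(p-1)/2}\frac{H_{2k}}{k^3}\equiv\sum_{k=1}^{(p-1)/2}\frac{H_k}{k^3}\pmod p ,
\]
which I shall call $(\star)$. For the left side of the lemma, $H(1,3;\tfrac{p-1}2)=\sum_{k=1}^{(p-1)/2}H_{k-1}/k^3$ differs from $\sum_{k=1}^{(p-1)/2}H_k/k^3$ only by $H(4;\tfrac{p-1}2)$, which is $\equiv0\pmod p$ by (ii). For the right side I write $H(1,-3;p-1)=\sum_{j=1}^{p-1}(-1)^jH_{j-1}/j^3$ and separate the even and odd values of $j$. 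The even part $\sum_{k=1}^{(p-1)/2}H_{2k-1}/(2k)^3$ becomes $\tfrac18\sum_k H_{2k}/k^3-\tfrac1{16}H(4;\tfrac{p-1}2)$ once $H_{2k-1}=H_{2k}-\tfrac1{2k}$ is inserted, hence is $\equiv\tfrac18\sum_k H_{2k}/k^3\pmod p$. In the odd part $-\sum_{k=1}^{(p-1)/2}H_{2k-2}/(2k-1)^3$ the substitution $k\mapsto\tfrac{p+1}2-k$ replaces $2k-1$ by $p-2k$ and $2k-2$ by $p-1-2k$; since $(p-2k)^3\equiv-8k^3\pmod p$ and $H_{p-1-2k}\equiv H_{2k}\pmod p$ (a standard consequence of $H_{p-1}\equiv0\pmod p$), this part too is $\equiv\tfrac18\sum_k H_{2k}/k^3\pmod p$. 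Therefore $4H(1,-3;p-1)\equiv\sum_{k=1}^{(p-1)/2}H_{2k}/k^3\pmod p$, and $(\star)$ then gives the lemma.

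It remains to prove $(\star)$, i.e.\ that $D:=\sum_{k=1}^{N}(H_{2k}-H_k)/k^3$ vanishes modulo $p$, where $N:=\tfrac{p-1}2$. Using $H_{2k}-H_k=\sum_{i=1}^{k}\tfrac1{k+i}$ and interchanging the order of summation, $D=\sum_{i=1}^{N}\sum_{k=i}^{N}\tfrac1{k^3(k+i)}$, and I would then apply the partial-fraction identity
\[
\frac1{k^3(k+i)}=\frac1{i\,k^3}-\frac1{i^2k^2}+\frac1{i^3k}-\frac1{i^3(k+i)} .
\]
In the first three resulting double sums one writes $\sum_{k=i}^{N}k^{-t}=H(t;N)-H(t;i-1)$ and reindexes; together they contribute $2H_NH(3;N)-H(3,1;N)-H(2;N)^2+H(2,2;N)-H(1,3;N)$. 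The fourth double sum equals $-\sum_{i=1}^{N}i^{-3}\bigl(H_{N+i}-H_{2i-1}\bigr)$, and here the decisive input is
\[
H_{N+i}\equiv H_N+2\sum_{\ell=1}^{i}\frac1{2\ell-1}\pmod p ,
\]
valid because $\tfrac1{N+s}\equiv\tfrac2{2s-1}\pmod p$ for $1\le s\le i$; since $\sum_{\ell=1}^{i}\tfrac1{2\ell-1}=H_{2i}-\tfrac12H_i$, this brings $D$ itself back onto the right-hand side, and collecting all contributions yields
\[
2D\equiv H_NH(3;N)-H(1,3;N)-H(3,1;N)-H(2;N)^2+H(2,2;N)-\tfrac12H(4;N)\pmod p .
\]
Finally the stuffle relation $H_NH(3;N)=H(1,3;N)+H(3,1;N)+H(4;N)$ together with the squaring identity $H(2;N)^2=2H(2,2;N)+H(4;N)$ collapses the right-hand side to $-\tfrac12H(2;N)^2$, and $H(2;\tfrac{p-1}2)\equiv0\pmod p$ by (ii) makes this $\equiv0\pmod p$; since $p$ is odd, $D\equiv0\pmod p$.

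The hard part is $(\star)$. Every first-order device — the reflection $k\mapsto p-k$, the depth-two stuffle relations, the parity splittings — when applied straight to $H(1,3;\tfrac{p-1}2)$ or to $H(1,-3;p-1)$ only reproduces identities that hold automatically, essentially because neither quantity is individually expressible modulo $p$ through $q_p(2)$, $B_{p-3}$ and $H(3,1;\tfrac{p-1}2)$ (which is exactly why they are carried as unknowns elsewhere in the paper). The partial-fraction step is what escapes this loop: the denominator $k+i$ forces a harmonic number $H_{N+i}$ with index in the upper half $\{N+1,\dots,p-1\}$ into the computation, and only reducing that index modulo $p$ — turning it into the odd-reciprocal sum $\sum_{\ell\le i}1/(2\ell-1)$ — makes the otherwise inert stuffle and squaring relations deliver the cancellation. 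The delicate point is the bookkeeping needed to match up the four double sums and to recognize the final collapse.
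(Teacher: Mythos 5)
Your proof is correct, and it follows a genuinely different route from the paper's. The paper proves the lemma by converting $H\left(1,3;\frac{p-1}2\right)$ into a Bernoulli-number convolution $\sum_r B_r\frac{2^{r+3}-2}{r+3}B_{p-3-r}$ (via $1/j\equiv j^{p-2}\pmod p$ and the power-sum formula), quoting analogous convolution expressions for $H(-1,3;p-1)$ and $H(1,-3;p-1)$ from Tauraso--Zhao, and finishing with their identity (55). You instead reduce both sides of the congruence to the single quantity $\sum_{k=1}^{(p-1)/2}H_{2k}/k^3$ --- the left side via $H\left(4;\frac{p-1}2\right)\equiv0\pmod p$, the right side via a parity split and the reflection $H_{p-1-m}\equiv H_m\pmod p$ --- and then establish the key fact $(\star)$ by a self-contained argument. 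I checked the details of $(\star)$: the partial-fraction decomposition is correct; the first three double sums contribute $2H_NH(3;N)-H(3,1;N)-H(2;N)^2+H(2,2;N)-H(1,3;N)$; the fourth contributes $-H_NH(3;N)-D-\frac12H(4;N)$ once one inserts $H_{N+i}-H_{2i-1}\equiv H_N+(H_{2i}-H_i)+\frac1{2i}\pmod p$; and the stuffle relation $H_NH(3;N)=H(1,3;N)+H(3,1;N)+H(4;N)$ together with $H(2;N)^2=2H(2,2;N)+H(4;N)$ does collapse the right-hand side of $2D\equiv\cdots$ to $-\frac12H(2;N)^2$, which vanishes mod $p$ by (ii). Your approach buys independence from the external Bernoulli-convolution congruences of [TZ, pp.~14] and [TZ, (55)], at the price of a longer but entirely elementary computation; the paper's proof is shorter on the page but rests on those quoted results.
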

\begin{proof} In view of \cite[p. 804, 23.1.4-7]{AS} or \cite[(7)]{TZ}, and by Fermat's Little Theorem, we have
\begin{align*}
&H\left(1,3;\frac{p-1}2\right)=\sum_{k=1}^{\frac{p-1}2}\frac1{k^3}\sum_{j=1}^{k-1}\frac1j\equiv\sum_{k=1}^{\frac{p-1}2}\frac1{k^3}\sum_{j=1}^{k-1}j^{p-2}\\
&=\sum_{r=0}^{p-2}\binom{p-1}r\frac{B_r}{p-1}\sum_{k=1}^{\frac{p-1}2}k^{p-4-r}=\sum_{r=0}^{p-6}\binom{p-1}r\frac{B_r}{p-1}\sum_{k=1}^{\frac{p-1}2}k^{p-4-r}\\
&\ \ \ \ \ \ \ \ \ \ \ \ \ \ \ \ \ \ \ \ \ \ \ \ +\sum_{r=p-5}^{p-2}\binom{p-1}r\frac{B_r}{p-1}\sum_{k=1}^{\frac{p-1}2}k^{p-4-r}\pmod p.
\end{align*}
By Fermat's Little Theorem again, (ii) and $B_n=0$ for all odd $n>1$, we have
\begin{align*}
&\sum_{r=0}^{p-6}\binom{p-1}r\frac{B_r}{p-1}\sum_{k=1}^{\frac{p-1}2}k^{p-4-r}\equiv\sum_{r=0}^{p-6}(-1)^{r+1}B_r\sum_{k=1}^{\frac{p-1}2}\frac1{r+3}\\
&\equiv\sum_{r=0}^{p-6}(-1)^rB_r\frac{2^{r+3}-2}{r+3}B_{p-3-r}=\sum_{r=0}^{p-6}B_r\frac{2^{r+3}-2}{r+3}B_{p-3-r}\pmod{p},
\end{align*}
and since $p-2>1$ and $p-4>1$ are odd,
\begin{align*}
&\sum_{r=p-5}^{p-2}\binom{p-1}r\frac{B_r}{p-1}\sum_{k=1}^{\frac{p-1}2}k^{p-4-r}\\
&=\binom{p-1}{p-3}\frac{B_{p-3}}{p-1}H_{\frac{p-1}2}+\binom{p-1}{p-5}\frac{B_{p-5}}{p-1}\sum_{k=1}^{\frac{p-1}2}k\\
&\equiv2q_p(2)B_{p-3}+\frac18B_{p-5}\pmod p.
\end{align*}
Thus, with $B_2=1/6$, we have
\begin{align*}
&H\left(1,3;\frac{p-1}2\right)\equiv\sum_{r=0}^{p-6}B_r\frac{2^{r+3}-2}{r+3}B_{p-3-r}+2q_p(2)B_{p-3}+\frac18B_{p-5}\\
&\equiv\sum_{r=0}^{p-5}B_r\frac{2^{r+3}-2}{r+3}B_{p-3-r}+2q_p(2)B_{p-3}\\
&\equiv2\sum_{r=2}^{p-3}\frac{1-2^{p-1-r}}rB_rB_{p-3-r}+2q_p(2)B_{p-3}\pmod p.
\end{align*}
In view of \cite[pp. 14]{TZ}, we have the following congruences modulo $p$,
\begin{align*}
&H(-1,3;p-1)\equiv2\sum_{r=2}^{p-3}(1-2^r)(1-2^{p-3-r})\frac{B_rB_{p-3-r}}r-2q_p(2)B_{p-3}\\
\equiv&2\sum_{r=2}^{p-3}\frac{1-2^r}rB_rB_{p-3-r}+\frac12\sum_{r=2}^{p-3}\frac{1-2^{p-1-r}}rB_rB_{p-3-r}-2q_p(2)B_{p-3}
\end{align*}
and 
$$
H(1,-3;p-1)\equiv-2\sum_{r=2}^{p-3}\frac{1-2^r}rB_rB_{p-3-r}+2q_p(2)B_{p-3}.
$$
These with \cite[(55)]{TZ} yield that
\begin{align*}
H\left(1,3;\frac{p-1}2\right)&\equiv4\left(H(-1,3;p-1)+H(1,-3;p-1)\right)+2q_p(2)B_{p-3}\\
&\equiv4H(1,-3;p-1)\pmod p.
\end{align*}
\noindent This proves Lemma \ref{Lemzhongyao}.
\end{proof}
\begin{lemma}\label{Lem2.6} For any prime $p>5$, we have the following modulo $p$
$$
\sum_{k=1}^{\frac{p-1}2}\frac1{k^2}\left(\sum_{j=1}^k\frac1{2j-1}\right)^2+\sum_{k=1}^{\frac{p-1}2}\frac1{k^3}\sum_{j=1}^k\frac1{2j-1}+\sum_{k=1}^{\frac{p-1}2}\frac1{k^2}\sum_{j=1}^k\frac1{(2j-1)^2}\equiv0.
$$
\end{lemma}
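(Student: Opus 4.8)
\emph{Proof plan.} Throughout write $m=(p-1)/2$, and abbreviate $O_k=\sum_{j=1}^k\frac1{2j-1}$, $O^{(2)}_k=\sum_{j=1}^k\frac1{(2j-1)^2}$, $O^{(3)}_k=\sum_{j=1}^k\frac1{(2j-1)^3}$, and $H^{(r)}_n=H(r;n)$; the claim is $S_1+S_2+S_3\equiv0\pmod p$, where $S_1=\sum_{k=1}^m\frac{O_k^2}{k^2}$, $S_2=\sum_{k=1}^m\frac{O_k}{k^3}$, $S_3=\sum_{k=1}^m\frac{O^{(2)}_k}{k^2}$. The plan is to first remove the ``easy'' part by an exact telescoping, and then evaluate the surviving sums modulo $p$ by reflection.

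For the telescoping, let $\Delta f(k)=f(k)-f(k-1)$. Using $O_k=O_{k-1}+\frac1{2k-1}$, $H^{(2)}_k=H^{(2)}_{k-1}+\frac1{k^2}$ and $H^{(3)}_k=H^{(3)}_{k-1}+\frac1{k^3}$ one checks the exact identities
$$\Delta(H^{(2)}_kO_k^2)=\frac{O_k^2}{k^2}+\frac{2H^{(2)}_{k-1}O_k}{2k-1}-\frac{H^{(2)}_{k-1}}{(2k-1)^2},\qquad \Delta(H^{(2)}_kO^{(2)}_k)=\frac{O^{(2)}_k}{k^2}+\frac{H^{(2)}_{k-1}}{(2k-1)^2},$$
$$\Delta(H^{(3)}_kO_k)=\frac{O_k}{k^3}+\frac{H^{(3)}_{k-1}}{2k-1}.$$
Summing over $1\le k\le m$ and adding, the two $\frac{H^{(2)}_{k-1}}{(2k-1)^2}$-sums cancel, so that
$$S_1+S_2+S_3=H^{(2)}_m(O_m^2+O^{(2)}_m)+H^{(3)}_mO_m-2A-C,\qquad A:=\sum_{k=1}^m\frac{H^{(2)}_{k-1}O_k}{2k-1},\quad C:=\sum_{k=1}^m\frac{H^{(3)}_{k-1}}{2k-1}.$$
Since $H^{(2)}_m\equiv0$, $O_m\equiv q_p(2)$ and (by (viii)) $H^{(3)}_m\equiv12X$ modulo $p$, the lemma reduces to $2A+C\equiv12q_p(2)X\pmod p$.

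Next I would compute $C$ and $A$ modulo $p$ using the two reflection facts $\frac1{2k-1}\equiv-\frac1{2(m+1-k)}$ and, for $r\ge1$, $H^{(r)}_{m-j}\equiv H^{(r)}_m-(-2)^rO^{(r)}_j\pmod p$. Applying $k\mapsto m+1-k$ to $C$ and then $H^{(3)}_{m-l}\equiv H^{(3)}_m+8O^{(3)}_l$ gives $C\equiv-\tfrac12 H^{(3)}_mH_m-4\sum_{l=1}^m\frac{O^{(3)}_l}{l}\equiv12q_p(2)X-4\sum_{l=1}^m\frac{O^{(3)}_l}{l}\pmod p$, so the target becomes the clean statement
$$A\equiv2\sum_{l=1}^m\frac{O^{(3)}_l}{l}\pmod p.$$
For $A$ I would apply the same two reflections to the factors $O_k$ and $\frac1{2k-1}$ and interchange the order of summation (the surviving $H^{(2)}_{k-1}$ being summed first), reducing $A$ modulo $p$ to a combination of double sums over $1\le l\le m$ such as $\sum_l\frac{O^{(2)}_lH_{l-1}}{l}$ and $q_p(2)\sum_l\frac{O^{(2)}_l}{l}$. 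The only genuinely new pieces are the ``mixed'' sums — ordinary harmonic numbers $H_{l-1},H_l$ weighted by $O^{(r)}_l$ — and these I would break up by $H_{l-1}=2(H_{2l}-O_l)-\frac1l$: the $O_l$-piece folds back into pure $O$-type sums, while the $H_{2l}$-piece is handled by $H_{2l}=H_{2l-1}+\frac1{2l}$ together with the even/odd splitting $\sum_{l\le m}g(2l)=\tfrac12\sum_{n\le p-1}(1+(-1)^n)g(n)$, so that it collapses to weight-$\le4$, depth-$\le3$ (alternating and non-alternating) multiple harmonic sums on $[1,p-1]$ and on $[1,m]$; these are evaluated modulo $p$ from (i)--(viii), the congruences of \cite{TZ}, the identities established inside the proof of Lemma \ref{Lem2.5} (for $H(1,2;m)$, $H(2,1;m)$, $\sum_k\frac{O_k}{k^2}\equiv-\tfrac{21}2X$, etc.), and Lemma \ref{Lemzhongyao} ($H(1,3;m)\equiv4H(1,-3;p-1)$).

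The main obstacle is exactly this evaluation of $A$. Unlike $C$, it is not killed by one reflection, and carrying out the reductions above produces, for the individual sub-sums, expressions involving not only $q_p(2)$, $X$, $q_p(2)B_{p-3}$ and $B_{p-5}$ but also the constant $h_{31}=H(3,1;m)$ (equivalently $H(1,-3;p-1)$), which is not known in closed form. The delicate part is to keep track of those ``unknown'' contributions precisely enough to see that their total coefficient in $2A+C$ vanishes, leaving exactly $12q_p(2)X$; I expect that, as elsewhere in the paper, one auxiliary combinatorial identity found by \texttt{Sigma} will be needed to make the bookkeeping for $A$ tractable.
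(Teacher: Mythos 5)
Your reduction is correct as far as it goes: the three telescoping identities check out, and summing them gives $S_1+S_2+S_3=H(2;m)\bigl(O_m^2+O_m^{(2)}\bigr)+H(3;m)O_m-2A-C$ with $m=(p-1)/2$; combined with $H(2;m)\equiv0$, $O_m\equiv q_p(2)$, $H(3;m)\equiv12X$ and your reflection evaluation of $C$, the lemma is correctly reduced to $A\equiv2\sum_{l=1}^{m}O_l^{(3)}/l\pmod p$. But that last congruence is exactly where your proof stops. You describe a strategy (reflect $O_k$ and $1/(2k-1)$, interchange summations, split $H_{l-1}$ via $H_{2l}$ and parity) and you yourself flag that the bookkeeping of the $h_{31}=H(3,1;m)$-type contributions is "the main obstacle" and that you "expect" an auxiliary \texttt{Sigma} identity will be needed. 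That is a genuine gap, not a routine verification: the whole difficulty of the lemma is concentrated in showing that the unknown weight-$4$ constants cancel, and your plan neither exhibits the needed identity nor carries out the cancellation. As written, the proposal proves a correct but strictly weaker statement (the reduction to $A$), not the lemma.

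For comparison, the paper avoids ever confronting a sum like $A$. It evaluates the three sums separately modulo $p$: for $S_1=\sum_k O_k^2/k^2$ it uses the \texttt{Sigma} identity $\sum_{k=1}^n\binom nk(-4)^kO_k/(k^2\binom{2k}k)=-2\sum_{k=1}^n O_k^{(2)}/k$ at $n=m$ together with \cite[Lemma 4.2]{S11a}; reading off the coefficient of $p$ and feeding in the mod-$p^2$ congruences (\ref{ds1}) and (\ref{ds3}) already established in Lemma \ref{Lem2.5} gives $S_1\equiv2H(1,-3;p-1)$. The sums $S_2$ and $S_3$ are converted by the parity splitting $\sum_{k\le m}g(2k)=\tfrac12\sum_{n\le p-1}(1+(-1)^n)g(n)$ into alternating multiple harmonic sums on $[1,p-1]$, evaluated from (i), (ii), (vii) and \cite{TZ}; then Lemma \ref{Lemzhongyao} and \cite[(61),(62)]{TZ} make the total collapse to $-2H(-2,2;p-1)+4H(1,-3;p-1)\equiv0$. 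If you want to complete your route, the most economical fix is probably not to attack $A$ head-on but to borrow the paper's individual evaluations of $S_1,S_2,S_3$ — at which point your telescoping becomes unnecessary.
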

\begin{proof} We can find and prove the following identity by \texttt{Sigma} \cite{S},
$$
\sum_{k=1}^n\frac{\binom{n}k(-4)^k}{k^2\binom{2k}k}\sum_{j=1}^k\frac1{2j-1}=-2\sum_{k=1}^n\frac1k\sum_{j=1}^k\frac1{(2j-1)^2}.
$$
Set $n=(p-1)/2$ in the above identity and by \cite[Lemma 4.2]{S11a}, we have
\begin{align*}
&-2\sum_{k=1}^{\frac{p-1}2}\frac1k\sum_{j=1}^k\frac1{(2j-1)^2}=\sum_{k=1}^{\frac{p-1}2}\frac{\binom{\frac{p-1}2}k(-4)^k}{k^2\binom{2k}k}\sum_{j=1}^k\frac1{2j-1}\\
&\equiv\sum_{k=1}^{\frac{p-1}2}\frac1{k^2}\bigg(1-p\sum_{j=1}^k\frac1{2j-1}\bigg)\sum_{j=1}^k\frac1{2j-1}\\
&=\sum_{k=1}^{\frac{p-1}2}\frac1{k^2}\sum_{j=1}^k\frac1{2j-1}-p\sum_{k=1}^{\frac{p-1}2}\frac1{k^2}\left(\sum_{j=1}^k\frac1{2j-1}\right)^2\pmod{p^2}.
\end{align*}
This, with (\ref{ds1}) and (\ref{ds3}) yields that
\begin{align}\label{u1}
&\sum_{k=1}^{\frac{p-1}2}\frac1{k^2}\left(\sum_{j=1}^k\frac1{2j-1}\right)^2\equiv2H(1,-3;p-1)\pmod{p}.
\end{align}
It is easy to see that
\begin{align*}
&\sum_{k=1}^{\frac{p-1}2}\frac1{k^2}\sum_{j=1}^k\frac1{(2j-1)^2}=2H(2;2;p-1)+2H(4;p-1)+2H(2,-2;p-1)\\
&\ \ \ \ \ \ \ \ \ \ \ \ \ \ \ \ \ \ \ \ \ \ \ +2H(-4;p-1)-\frac14H\left(2;2;\frac{p-1}2\right)-\frac14H\left(4;\frac{p-1}2\right).
\end{align*}
So in view of (i), (ii), (vii), \cite[(60]{TZ} and 
$$
H\left(2;2;\frac{p-1}2\right)=\frac12\left(H\left(2;\frac{p-1}2\right)^2-H\left(4;\frac{p-1}2\right)\right)\equiv0\pmod p,
$$ 
we have
\begin{align}\label{u2}
\sum_{k=1}^{\frac{p-1}2}\frac1{k^2}\sum_{j=1}^k\frac1{(2j-1)^2}\equiv-2H(-2,2;p-1)\pmod p.
\end{align}
Similarly, by (i), (ii) and (vii), we have
\begin{align}\label{u3}
&\sum_{k=1}^{\frac{p-1}2}\frac1{k^3}\sum_{j=1}^k\frac1{2j-1}=4H(1,3;p-1)+4H(4;p-1)+4H(1,-3;p-1)\notag\\
&\ \ \ \ \ \ \ \ \ \ \ \ \ \ \ \ \ \ \  +4H(-4;p-1)-\frac12H\left(1,3;\frac{p-1}2\right)-\frac12H\left(4;\frac{p-1}2\right)\notag\\
&\equiv 4H(1,-3;p-1)-\frac12H\left(1,3;\frac{p-1}2\right)\pmod p.
\end{align}
These, with Lemma \ref{Lemzhongyao}, \cite[(61) and (62)]{TZ} yield that
\begin{align*}
&\sum_{k=1}^{\frac{p-1}2}\frac1{k^2}\left(\sum_{j=1}^k\frac1{2j-1}\right)^2+\sum_{k=1}^{\frac{p-1}2}\frac1{k^2}\sum_{j=1}^k\frac1{(2j-1)^2}+\sum_{k=1}^{\frac{p-1}2}\frac1{k^3}\sum_{j=1}^k\frac1{2j-1}\\
\equiv&2H(1,-3;p-1)-2H(-2,2;p-1)+4H(1,-3;p-1)-\frac12H\left(1,3;\frac{p-1}2\right)\\
\equiv&-2H(-2,2;p-1)+4H(1,-3;p-1)\equiv0\pmod p.
\end{align*}
\noindent Now the proof of Lemma \ref{Lem2.6} is complete.
\end{proof}
\noindent In view of (\ref{p-1S}), (\ref{p-12S}) and Lemma \ref{Lem2.4}, we have
\begin{align*}
&S_{p-1}(a)\equiv4p^2tX+2\sum_{k=1}^{\langle a\rangle_p}\frac{-1}{pt+k}+2p^2t(t+1)\bigg(H(3;\langle a\rangle_p)\notag\\
&\ \ \ \ \ \ \ \ \ \ \ \ \ \ \ \ \ \ \ \ \ \ \ \ \ \ \ \ -(3pt+p)H(4;\langle a\rangle_p)+2p\sum_{k=1}^{\langle a\rangle_p}\frac{H_k}{k^3}\bigg)\pmod{p^4}.
\end{align*}
and
\begin{align*}
&S_{\frac{p-1}2}(a)\\
&\equiv-12p^2t^2X+14p^2tX+2\sum_{k=1}^{\langle a\rangle_p}\frac{-1}{pt+k}+2ptH(2;\langle a\rangle_p)-4p^2t^2H(3;\langle a\rangle_p)\notag\\
&+6p^3t^3H(4;\langle a\rangle_p)+4p^2t\sum_{k=1}^{\langle a\rangle_p}\frac1{k^2}\sum_{j=1}^k\frac1{2j-1}-8p^3t^2\sum_{k=1}^{\langle a\rangle_p}\frac1{k^3}\sum_{j=1}^k\frac1{2j-1}\notag\\
&+4p^3t\sum_{k=1}^{\langle a\rangle_p}\frac1{k^2}\left(\sum_{j=1}^k\frac1{2j-1}\right)^2-8p^3t^2\sum_{k=1}^{\langle a\rangle_p}\frac{1}{k^2}\sum_{j=1}^k\frac1{(2j-1)^2}\pmod{p^4}.
\end{align*}
Set $a=-1/2$, then $t=-1/2$, thus by (ii), (viii), Lemmas \ref{Lem2.5} and \ref{Lem2.6}, we have
$$
\sum_{k=1}^{p-1}\frac{\binom{2k}k^2}{k16^k}=S_{p-1}\left(-\frac12\right)\equiv-2H_{\frac{p-1}2}-p^3\sum_{k=1}^{\frac{p-1}2}\frac{H_k}{k^3}\pmod{p^4}
$$
and
\begin{align*}
&\sum_{k=\frac{p+1}2}^{p-1}\frac{\binom{2k}k^2}{k16^k}=S_{p-1}\left(-\frac12\right)-S_{\frac{p-1}2}\left(-\frac12\right)\\
&\equiv-21p^2X+2p^3\sum_{k=1}^{\frac{p-1}2}\frac1{k^2}\left(\sum_{j=1}^k\frac1{2j-1}\right)^2+2p^3\sum_{k=1}^{\frac{p-1}2}\frac{1}{k^2}\sum_{j=1}^k\frac1{(2j-1)^2}\\
&\ \ \ \ \ \ \ \ \ \ \ \ \ \ \ \ \ \ \ \ \ \ \ \ \ \ \ +2p^3\sum_{k=1}^{\frac{p-1}2}\frac1{k^3}\sum_{j=1}^k\frac1{2j-1}\\
&\equiv-21p^2X\equiv-\frac{21}2H_{p-1}\pmod{p^4}.
\end{align*}
\noindent Now the proof of Theorem \ref{Th1.1} is complete.\qed
\section{Proof of Theorem \ref{Th1.2}}
\setcounter{lemma}{0}
\setcounter{theorem}{0}
\setcounter{corollary}{0}
\setcounter{remark}{0}
\setcounter{equation}{0}
\setcounter{conjecture}{0}
\begin{lemma}\label{Lem3.1} For any prime $p>5$, we have
\begin{align*}
H_{\frac{p-1}2}\equiv-2q_p(2)+pq^2_p(2)-\frac23p^2q^3_p(2)+\frac12p^3q^4_p(2)+\frac72p^2X\pmod{p^4}.
\end{align*}
\end{lemma}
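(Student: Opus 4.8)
The plan is to recover $H_{\frac{p-1}2}$ from the $p$-adic logarithm of $\binom{p-1}{(p-1)/2}$. Starting from $\binom{p-1}{(p-1)/2}=\big(\prod_{j=(p+1)/2}^{p-1}j\big)\big/\big(\tfrac{p-1}2\big)!$ and substituting $j\mapsto p-k$ yields the identity
$$
(-1)^{(p-1)/2}\binom{p-1}{(p-1)/2}=\prod_{k=1}^{(p-1)/2}\left(1-\frac pk\right),
$$
so that
$$
\log\!\left[(-1)^{(p-1)/2}\binom{p-1}{(p-1)/2}\right]=-\sum_{m\geq1}\frac{p^m}{m}\,H\!\left(m;\frac{p-1}2\right).
$$
Modulo $p^5$ the $m=4$ term drops out because $H(4;\tfrac{p-1}2)\equiv0\pmod p$ (by (ii) with $a=4$), while every $m\geq5$ term is $\equiv0\pmod{p^5}$ since $1/m\in\mathbb Z_p$ for $p>5$. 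Inserting $H(2;\tfrac{p-1}2)\equiv-14pX\pmod{p^3}$ and $H(3;\tfrac{p-1}2)\equiv12X\pmod{p^2}$ from (viii), the two surviving tails contribute $7p^3X-4p^3X=3p^3X$, giving
$$
\log\!\left[(-1)^{(p-1)/2}\binom{p-1}{(p-1)/2}\right]\equiv-pH_{\frac{p-1}2}+3p^3X\pmod{p^5}.
$$

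For the left-hand side I would invoke a sharpened Morley congruence,
$$
(-1)^{(p-1)/2}\binom{p-1}{(p-1)/2}\equiv4^{p-1}\Big(1-\tfrac12p^3X\Big)\pmod{p^5}.
$$
Morley's classical congruence already gives this modulo $p^3$; the $p^3$-coefficient $-\tfrac12X$ (which is $\equiv\tfrac1{12}B_{p-3}\pmod p$, using $X\equiv-\tfrac16B_{p-3}\pmod p$) is the familiar Bernoulli refinement, and one checks directly that no further $p^4$-term is present. Since $4^{p-1}=(1+pq_p(2))^2$, expanding the logarithm of the right-hand side gives
$$
\log\!\left[(-1)^{(p-1)/2}\binom{p-1}{(p-1)/2}\right]\equiv2pq_p(2)-p^2q_p(2)^2+\tfrac23p^3q_p(2)^3-\tfrac12p^4q_p(2)^4-\tfrac12p^3X\pmod{p^5}.
$$
Equating the two expressions for the logarithm and dividing through by $-p$ produces exactly the claimed congruence for $H_{\frac{p-1}2}$ modulo $p^4$, the $X$-contribution collapsing to $\big(3p^3X+\tfrac12p^3X\big)/p=\tfrac72p^2X$.

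The main obstacle is the \emph{modulo $p^5$} strength of the sharpened Morley congruence: because $H_{\frac{p-1}2}$ is multiplied by $p$ in the logarithm identity, knowing $\binom{p-1}{(p-1)/2}$ only modulo $p^3$ or $p^4$ would determine $H_{\frac{p-1}2}$ only modulo $p^2$ or $p^3$, so the whole task is to push Morley one power of $p$ past the textbook statement. If a direct reference is unavailable I would prove it by hand, e.g.\ from the identity $4^{p-1}\big/\binom{p-1}{(p-1)/2}=\prod_{k=1}^{(p-1)/2}\tfrac{2k}{2k-1}$, or by relating $\binom{p-1}{(p-1)/2}$ to $\binom{2p-2}{p-1}$ and using a super-strong Wolstenholme-type congruence, fixing the $p^3$- and $p^4$-coefficients with the same congruences (ii), (viii) employed above. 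Throughout, $p>5$ is used to keep $\tfrac13,\tfrac15,\tfrac17$ invertible in $\mathbb Z_p$.
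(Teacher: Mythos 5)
Your reduction is set up correctly: the identity $(-1)^{(p-1)/2}\binom{p-1}{(p-1)/2}=\prod_{k=1}^{(p-1)/2}(1-p/k)$, the $p$-adic logarithm expansion, and the evaluation of the $m=2,3,4$ tails via (ii) and (viii) are all sound, and the final bookkeeping does reproduce the stated congruence. The problem is that the one input you have not proved --- the ``sharpened Morley congruence'' $(-1)^{(p-1)/2}\binom{p-1}{(p-1)/2}\equiv 4^{p-1}\left(1-\tfrac12p^3X\right)\pmod{p^5}$ --- is, given those same tail evaluations, \emph{exactly equivalent} to Lemma \ref{Lem3.1}: the logarithm identity converts knowledge of the left side modulo $p^5$ into knowledge of $H_{(p-1)/2}$ modulo $p^4$ and vice versa. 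So the argument as written is circular. Everything genuinely new in the lemma (the coefficient $\tfrac72X$ at $p^2$, equivalently the vanishing of the $p^4$-coefficient in your refined Morley congruence once the $p^3$-coefficient is written as $-\tfrac12X$ with $X$ the full two-term Bernoulli combination rather than merely $-\tfrac16B_{p-3}$) has been relocated into the unproved assertion ``one checks directly that no further $p^4$-term is present,'' which is precisely the content of the lemma. Your fallback routes do not close the gap: the identity you quote should read $2^{p-1}/\binom{p-1}{(p-1)/2}=\prod_{k=1}^{(p-1)/2}\frac{2k}{2k-1}$ (not $4^{p-1}$), and in any case $\log\frac{2k}{2k-1}$ has no $p$-adically convergent expansion, so it does not feed into the same machinery; the relation to $\binom{2p-2}{p-1}$ is left entirely unspecified.

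For comparison, the paper proves the lemma without touching binomial coefficients at all: it writes $H_{(p-1)/2}\equiv\sum_{k=1}^{(p-1)/2}k^{\varphi(p^4)-1}\pmod{p^4}$ by Euler's theorem, evaluates the power sum as $\frac{1}{\varphi(p^4)}\left(B_{\varphi(p^4)}\left(\frac{p+1}2\right)-B_{\varphi(p^4)}\right)$, expands the Bernoulli polynomial around $\tfrac12$ (where all odd-index values vanish), and then uses the Kummer-type congruence for $B_{\varphi(p^4)-2}/(\varphi(p^4)-2)$ together with $pB_{\varphi(p^4)}\equiv p-1\pmod{p^4}$ to extract the $X$ and $q_p(2)$ terms. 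If you wish to salvage your approach you must supply an independent proof of the modulo-$p^5$ Morley refinement, which is at least as hard as the lemma itself; as it stands the proposal has a genuine gap at its central step.
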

\begin{proof} It is well known that \cite{IR}
\begin{gather*}
\sum_{r=0}^{n-1}r^m=\frac{B_{m+1}(n)-B_{m+1}}{m+1},\\
B_m(x+y)=\sum_{r=0}^m\binom{m}rB_{m-r}(x)y^r
\end{gather*}
and
$$
B_m\left(\frac12\right)=(2^{1-m}-1)B_m.
$$
For each odd integer $n>1$, we know $B_n=0$, hence $B_m\left(\frac12\right)=0$. By Euler's theorem, we see that
\begin{align*}
&\sum_{k=1}^{\frac{p-1}2}\frac1k\equiv\sum_{k=1}^{\frac{p-1}2}k^{\varphi(p^4)-1}=\frac{B_{\varphi(p^4)}(\frac{p+1}2)-B_{\varphi(p^4)}}{\varphi(p^4)}\\
=&\frac{B_{\varphi(p^4)}(\frac{p+1}2)-B_{\varphi(p^4)}\left(\frac12\right)}{\varphi(p^4)}+\frac{B_{\varphi(p^4)}\left(\frac{1}2\right)-B_{\varphi(p^4)}}{\varphi(p^4)}\\
=&\sum_{k=1}^{\varphi(p^4)}\frac1k\binom{\varphi(p^4)-1}{k-1}B_{\varphi(p^4)-k}\left(\frac12\right)\left(\frac{p}2\right)^k+\frac{B_{\varphi(p^4)}\left(\frac{1}2\right)-B_{\varphi(p^4)}}{\varphi(p^4)}\\
\equiv&\frac{p^2}8(\varphi(p^4)-1)B_{\varphi(p^4)-2}\left(\frac12\right)+\frac{B_{\varphi(p^4)}\left(\frac{1}2\right)-B_{\varphi(p^4)}}{\varphi(p^4)}\\
\equiv&-\frac78p^2B_{\varphi(p^4)-2}-2\frac{2^{\varphi(p^4)}-1}{\varphi(p^4)}B_{\varphi(p^4)}\pmod{p^4}.
\end{align*}
In view of \cite[(1.1)]{s2000}, we have
\begin{align*}
-\frac12B_{\varphi(p^4)-2}&\equiv\frac{B_{\varphi(p^4)-2}}{\varphi(p^4)-2}\equiv(p^3-1)\frac{B_{2p-4}}{2p-4}-(p^3-2)\left(1-p^{p-4}\right)\frac{B_{p-3}}{p-3}\\
&\equiv-\frac{B_{2p-4}}{2p-4}+2\frac{B_{p-3}}{p-3}=2X\pmod{p^2}.
\end{align*}
Since $pB_{\varphi(p^4)}\equiv p-1\pmod{p^4}$ by \cite[Corollary 4.1]{s1997}, we can deduce that
\begin{align*}
&\frac{2^{\varphi(p^4)}-1}{\varphi(p^4)}B_{\varphi(p^4)}=\frac{2^{\varphi(p^4)}-1}{p^4}\frac{pB_{\varphi(p^4)}}{p-1}\equiv\frac{(1+pq_p(2))^{p^3}-1}{p^4}\\
\equiv&\frac1{p^4}\left(\binom{p^3}1pq_p(2)+\binom{p^3}2p^2q^2_p(2)+\binom{p^3}3p^3q^3_p(2)+\binom{p^3}4p^4q^4_p(2)\right)\\
\equiv&q_p(2)-\frac{1}2pq^2_p(2)+\frac13p^2q^3_p(2)-\frac14p^3q^4_p(2)\pmod{p^4}.
\end{align*}
Thus,
\begin{align*}
H_{\frac{p-1}2}\equiv-2q_p(2)+pq^2_p(2)-\frac23p^2q^3_p(2)+\frac12p^3q^4_p(2)+\frac72p^2X\pmod{p^4}.
\end{align*}
Therefore the proof of Lemma \ref{Lem3.1} is complete.
\end{proof}
\noindent {\it Proof of Theorem \ref{Th1.2}}. We can find and prove the following identity by \texttt{Sigma},
$$
\sum_{k=1}^n\frac{\binom{n}k(-4)^k}{k^2\binom{2k}k}=-2\sum_{k=1}^n\frac1k\sum_{j=1}^k\frac1{2j-1}.
$$
It is easy to check that
\begin{align*}
&-2\sum_{k=1}^{\frac{p-1}2}\frac1k\sum_{j=1}^k\frac1{2j-1}=H\left(1,1;\frac{p-1}2\right)+H\left(2;\frac{p-1}2\right)\\
&-2H(1,1;p-1)-2H(2;p-1)-2H(1,-1;p-1)-2H(-2;p-1).
\end{align*}
By \cite[Lemma 4.2]{S11a}, we have
\begin{align*}
\sum_{k=1}^{\frac{p-1}2}\frac{\binom{\frac{p-1}2}k(-4)^k}{k^2\binom{2k}k}&\equiv\sum_{k=1}^{\frac{p-1}2}\frac1{k^2}\bigg(1-p\sum_{j=1}^k\frac1{2j-1}+\frac{p^2}2\left(\sum_{j=1}^k\frac1{2j-1}\right)^2\\
&\ \ \ \ \ \ \ \ \ \  -\frac{p^2}2\sum_{j=1}^k\frac1{(2j-1)^2}\bigg)\pmod{p^3}.
\end{align*}
This, with the identity $H(1,1;n)=\frac12(H_{n}^2-H(2;n))$ yields that
\begin{align*}
&\frac{p^2}2\left(\sum_{k=1}^{\frac{p-1}2}\frac1{k^2}\left(\sum_{j=1}^k\frac1{2j-1}\right)^2+\sum_{k=1}^{\frac{p-1}2}\frac1{k^3}\sum_{j=1}^k\frac1{2j-1}+\sum_{k=1}^{\frac{p-1}2}\frac1{k^2}\sum_{j=1}^k\frac1{(2j-1)^2}\right)\\
&\equiv\frac12H_{\frac{p-1}2}^2-\frac12H\left(2;\frac{p-1}2\right)-H_{p-1}^2-H(2;p-1)-2H(-2;p-1)\\
&\ \ \ \ -2H(1,-1;p-1)+p\sum_{k=1}^{\frac{p-1}2}\frac1{k^2}\sum_{j=1}^k\frac1{2j-1}+p^2\sum_{k=1}^{\frac{p-1}2}\frac1{k^2}\sum_{j=1}^k\frac1{(2j-1)^2}\\
&\ \ \ \ +\frac{p^2}2\sum_{k=1}^{\frac{p-1}2}\frac1{k^3}\sum_{j=1}^k\frac1{2j-1}\pmod{p^3}.
\end{align*}
By (viii), we have
\begin{align*}
H(-2;p-1)&=\sum_{k=1}^{p-1}\frac{(-1)^k+1}{k^2}-H(2;p-1)\\
&=\frac12H\left(2;\frac{p-1}2\right)-H(2;p-1)\equiv-3pX\pmod{p^3}.
\end{align*}
These, with (i), (viii), (\ref{u2})-(\ref{u3}), Lemma \ref{Lem2.5} and \cite[(59),(62)]{TZ} yield that
\begin{align}\label{zhongyao2}
&\frac{p^2}2\left(\sum_{k=1}^{\frac{p-1}2}\frac1{k^2}\left(\sum_{j=1}^k\frac1{2j-1}\right)^2+\sum_{k=1}^{\frac{p-1}2}\frac1{k^3}\sum_{j=1}^k\frac1{2j-1}+\sum_{k=1}^{\frac{p-1}2}\frac1{k^2}\sum_{j=1}^k\frac1{(2j-1)^2}\right)\notag\\
&\equiv\frac12H_{\frac{p-1}2}^2+\frac{13}2pX-2H(1,-1;p-1)+p^2q_p(2)B_{p-3}-\frac14p^2h_{31}\notag\\
&\ \ \ \ \ \ \ \ \ \ \ \ \ \ \ -2p^2H(1,-3;p-1)\pmod{p^3}.
\end{align}
It is easy to see that
\begin{align*}
&-2q_p(2)=\sum_{k=1}^{p-1}\frac{(-1)^k}k\binom{p-1}{k-1}\equiv\sum_{k=1}^{p-1}\frac{(-1)^k}k\bigg(1-pH(1;k-1)\\
&\ \ \ \ \ \ \ \ \ \ \ \ \ \ \ \ \ \ \ \ \ +p^2H(1,1;k-1)-p^3H(1,1,1;k-1)\bigg)\\
&=H_{\frac{p-1}2}-H_{p-1}-pH(1,-1;p-1)+p^2H(1,1,-1;p-1)\\
&\ \ \ \ \ \ -p^3H(1,1,1,-1;p-1)\pmod{p^4}.
\end{align*}
In view of \cite[(87) and Remark 7.7]{TZ}, we have
\begin{align*}
H(1,1,-1;p-1)\equiv\sum_{k=1}^{p-1}\frac{2^k}{k^3}&+pH(1,1,1,-1;p-1)-\frac{7}{12}pq_p(2)B_{p-3}\\
&+pH(1,-3;p-1)\pmod{p^2}.
\end{align*}
These, with (viii) yield that
\begin{align*}
H(1,-1;p-1)\equiv&\frac{H_{\frac{p-1}2}+2q_p(2)}{p}+p\sum_{k=1}^{p-1}\frac{2^k}{k^3}-\frac{7}{12}p^2q_p(2)B_{p-3}\\
&+p^2H(1,-3;p-1)-2pX\pmod{p^3}.
\end{align*}
Thus, in view of (\ref{zhongyao2}) and Lemma \ref{Lem2.6}, we have
\begin{align*}
\sum_{k=1}^{p-1}\frac{2^k}{k^3}\equiv&\frac1{4p}H_{\frac{p-1}2}^2+\frac{21}4X-\frac{H_{\frac{p-1}2}+2q_p(2)}{p^2}+\frac{13}{12}pq_p(2)B_{p-3}\\
&-2pH(1,-3;p-1)-\frac18ph_{31}\pmod{p^2}.
\end{align*}
Therefore, by Lemmas \ref{Lem2.5}-\ref{Lemzhongyao} and \ref{Lem3.1}, we immediately obtain the desired result 
$$
\sum_{k=1}^{p-1}\frac{2^k}{k^3}\equiv-\frac13q^3_p(2)+\frac74X+\frac5{12}pq^4_p(2)+\frac{7p}6q_p(2)B_{p-3}-\frac{3p}8\sum_{k=1}^{\frac{p-1}2}\frac{H_k}{k^3}\pmod{p^2}.
$$
Now the proof of Theorem \ref{Th1.2} is complete.\qed

\end{document}